\newcommand{\eps}{\varepsilon}
\newcommand{\grad}{\nabla}
\newcommand{\R}{\mathbb{R}}
\renewcommand{\div}{\grad\cdot}
\newtheorem{theorem}{Theorem}
\newtheorem{lemma}{Lemma}
\newtheorem{proposition}{Proposition}
\newtheorem{definition}{Definition}
\newtheorem{example}{Example}
\newtheorem{remark}{Remark}
\renewcommand{\L}{\mathcal{L}}
\newcommand{\intrd}{\int_{\mathbb{R}^d}}
\newcommand{\rd}{\mathbb{R}^d}
\newcommand{\id}{\textrm{id}}
\newcommand{\opt}{\textrm{opt}}
\title[Optimal stability estimates for advection-diffusion equations]{Optimal stability estimates and a new uniqueness result for advection-diffusion equations}
\author{Víctor Navarro-Fernández \and André Schlichting \and Christian Seis}
\email{\{victor.navarro,a.schlichting,seis\}@uni-muenster.de}
\address{Institut f\"ur Analysis und Numerik,  Westf\"alische Wilhelms-Universit\"at M\"unster \newline
Orl\'eans-Ring~10, 48149 M\"unster, Germany.}
\date{\today}
\begin{document}

\numberwithin{equation}{section}

\begin{abstract}
This paper contains two main contributions. First, it provides optimal stability estimates for advection-diffusion equations in a setting in which the velocity field is Sobolev regular in the spatial variable. This estimate is formulated with the help of Kantorovich--Rubinstein distances with logarithmic cost functions. Second, the stability estimates are extended to the advection-diffusion equations with velocity fields whose gradients are singular integrals of $L^1$ functions entailing a new well-posedness result.
\end{abstract}

\maketitle


\section{Introduction}
\label{s:intro}

The advection-diffusion equation is of remarkable importance in many different physical contexts. For instance, one can consider a scalar parameter $\theta$ that represents the temperature of a fluid or a mass density. Then the advection-diffusion equation will give information about the evolution in time and space of this physical quantity when it is subjected to an advection field $u$ and when diffusion plays a role due to the molecular motion of the involved particles. Some specific examples of physical phenomena modeled with this equation include drift-diffusion processes in semiconductor physics,  heat transmission through a fluid layer, or mixing by stirring in industrial applications.

The Cauchy problem for the advection-diffusion equation is the following
\begin{equation}
	\left\lbrace
	\begin{array}{rcll}
		\partial_t\theta + \nabla\cdot (u\theta) & = & \kappa \Delta \theta &\quad \text{ in } (0,T)\times\rd,  \\
		\theta(0,\cdot) & = & \theta^0 &\quad \text{ in } \rd,
	\end{array}
	\right.
	\label{eq:adv-diff}
\end{equation}
driven by some vector field $u:(0,T)\times \R^d \to \R^d$, started at some given initial datum $\theta^0$ and with a given diffusion coefficient $\kappa>0$.

The mathematical theory in the setting of smooth vector fields is contained in the general classical theory for parabolic equations; see, for instance, Ladyzhenskaya et al.~\cite{Lady68}. Thanks to the linearity of the equations, well-posedness is then based on simple a priori estimates.

However, there are some important examples, for instance,  in the areas of fluid dynamics or kinetic theories, in which the advecting velocity fields and the observed quantities are rather rough. Thus, the mathematical theory for the advection-diffusion equation \eqref{eq:adv-diff} falls out of the setting covered by classical theories, and it has been discovered only recently that there are situations in which integrable distributional solutions cease to be unique \cite{ModenaSzekelyhidi18,ModenaSattig2020,CheskidovLuo2021}. We will discuss this issue later in more detail.

A customary proof of uniqueness is based on energy estimates. We are aware of two  approaches to energy estimates that deal with different weak hypotheses on regularity and integrability of vector fields and solutions, both embarking from the following (at first formal) estimates 
\begin{equation}\label{eq:energy:estimate}
\frac1{(q-1)q}\frac{d}{dt} \int_{\R^d} |\theta|^q\, dx + \kappa \int_{\R^d} |\theta|^{q-2} |\grad \theta|^2\, dx \le \begin{cases}  \int_{\R^d} |\theta|^{q-1} |\grad\theta| |u|\, dx,\\[1em]
	\int_{\R^d} |\theta|^q (\div u)^{-}\, dx,
\end{cases}\qquad\text{ for } q>1.
\end{equation}
Here, the superscript minus sign labels the negative part of the divergence. The limit $q\to 1$ leads to an a priori estimate in terms of the entropy and is discussed in Remark~\ref{R1} below.

The first approach, which is based on the first estimate, applies to velocity fields in the integrability class $ L^r((0,T);L^p(\rd))$ provided that $r$ and $p$ satisfy the so-called Ladyzhenskaya--Prodi--Serrin condition
\begin{equation*}
	\frac{2}{r}+\frac{d}{p} \leq 1.
\end{equation*} 
Here, the task is to bound the integral on the right-hand side in terms of those on the left-hand side, which can be achieved with standard H\"older and Sobolev inequalities. Apparently, the parabolic structure is of fundamental importance in this approach and the method ceases to hold in the non-diffusive setting $\kappa=0$. 
Since we are particularly interested in estimates that hold uniformly for positive but arbitrary small diffusivity parameter $\kappa$, we will not further elaborate on it here. We refer to \cite{BianchiniColomboCrippaSpinolo17} for a simple proof in the $q=2$ setting and a discussion on optimality.

The second approach is particularly important in models in which the fluid is at most weakly compressible in the sense that
\begin{equation}\label{eq:near_incompressible}
	(\nabla\cdot u)^- \in L^1\bigl((0,T);L^\infty(\rd)\bigr).
\end{equation}
In this case, an application of a Gronwall argument implies the estimate
\begin{equation}
	\|\theta\|_{L^\infty(L^q)} + c_{\kappa, q} \|\grad|\theta|^{\frac{q}2}\|_{L^2(L^2)}^{\frac2q} \lesssim \Lambda^{1-\frac{1}{q}}\|\theta^0\|_{L^q} 
	\label{eq:apriori_1}
\end{equation}
where $c_{\kappa,q}^q  = \kappa(q-1)$ and $\log \Lambda = \|(\nabla\cdot u)^-\|_{L^1(L^\infty)}$. In order to make this approach work rigorously, the validity of the chain rule has to be confirmed to establish the energy estimate~\eqref{eq:energy:estimate}. This leads us to the concept of \emph{renormalized solutions}, which were originally introduced by DiPerna and Lions in \cite{DiPernaLions89}: An integrable function $\theta$ is called a renormalized solution to the advection-diffusion equation \eqref{eq:adv-diff} if it satisfies
\begin{equation}\label{e:renomalized:solution}
	\partial_t\beta(\theta) + \nabla\cdot(u\beta(\theta)) = (\beta(\theta)-\theta\beta'(\theta))\nabla\cdot u + \kappa\Delta\beta(\theta) - \kappa \beta''(\theta) |\grad\theta|^2
\end{equation}
in the distributional sense, for any bounded $C^2$ function $\beta:\mathbb{R}\rightarrow\mathbb{R}$ whose derivatives are bounded and vanish at zero. Renormalized solutions are easily proved to be unique, and DiPerna and Lions' theory shows that distributional solutions in $L^{\infty}((0,T);L^q(\R^d))$ are renormalized if the advecting velocity field is Sobolev regular in the spatial coordinate, namely $u\in L^1 ((0,T);W^{1,p}(\R^d))$, and $p$ and $q$ have to be H\"older conjugates (or larger), $1/p+1/q\le 1$. In what follows, we will occasionally refer to this setting as the \emph{DiPerna--Lions setting}.

The DiPerna--Lions theory was further extended by Ambrosio~\cite{Ambrosio04} to vector-fields of $BV$-regularity. However, there are certain situations in which a direct verification of the renormalization property~\eqref{e:renomalized:solution} seems to fail.
One regards well-posedness results for vector-fields, whose gradient is a singular integral of an $L^1$ function~\cite{CrippaNobiliSeisSpirito17}, which is of interest in certain problems in fluid dynamics. We revisit this setting later in this paper.

It is certainly surprising that the regularizing effect of diffusion does not rule out non-uniqueness and that the DiPerna--Lions setting is \emph{both} optimal for the advection equation ($\kappa=0$ in~\eqref{eq:adv-diff}) as well as the advection-diffusion equation~\eqref{eq:adv-diff}, at least up to a dimension-dependent gap. Indeed, Modena, Sattig, and Székelyhidi \cite{ModenaSzekelyhidi18,ModenaSattig2020} showed that there are Sobolev vector fields $u\in C([0,T]; (L^p \cap W^{1,\tilde p})(\mathbb{T}^d))$ for which uniqueness fails in the class of densities $\theta\in C([0,T];L^{q}(\mathbb{T}^d))$ with $p, q\in (1,\infty)$, $\tilde p \in [1,\infty)$ and $1/p+1/q=1$ and such that it holds
\begin{equation*}
\frac{1}{\tilde p}+\frac{1}{q} > 1 + \frac{1}{d} \qquad \text{and} \qquad p < d.
\end{equation*}
An improvement on the above condition is obtained by Cheskidov and Luo~\cite{CheskidovLuo2021} for the transport equation ($\kappa=0$) at the expense of a worse time-integrability, that is solutions $\theta \in L^1([0,T];L^q(\mathbb{T}^d))$ with $1/p+1/q>1$.

DiPerna and Lions' theory is extremely powerful and finds numerous applications to various types of advection and kinetic equations. As a by-product, 
it provides qualitative stability statements for the linear equation \eqref{eq:adv-diff}. For instance,  considering two different solutions with two different advection fields,
\begin{equation*}
	\partial_t \theta_\varepsilon + \nabla\cdot(u_\varepsilon\theta_\varepsilon) = \kappa\Delta\theta_\varepsilon, \qquad \partial_t \theta + \nabla\cdot(u\theta) = \kappa\Delta\theta,
\end{equation*}
we know that $\theta_\varepsilon \rightarrow \theta$ when $\varepsilon\rightarrow 0$ provided $u_\varepsilon \rightarrow u$ in some suitable norms. Similarly,  for vanishing diffusivities, $\kappa\rightarrow 0$, solutions of the advection-diffusion equation \eqref{eq:adv-diff} converge to the solutions of the transport equation 
\begin{equation}
	\partial_t \theta + \nabla\cdot(u\theta) = 0.
	\label{eq:transport}
\end{equation}
By nature, DiPerna and Lions' theory cannot provide rates in these qualitative stability statements. Besides establishing well-posedness, in particular uniqueness, those are interesting from the point of view of an error analysis for numerical approximations. But also for modeling purposes, quantitative results are crucial, for instance, with regard to the zero-diffusivity limit $\kappa\rightarrow 0$. 

The purpose of this paper is to derive \emph{stability estimates} for the advection-diffusion equation~\eqref{eq:adv-diff} both in the DiPerna--Lions setting and the slightly more singular setting from~\cite{CrippaNobiliSeisSpirito17}.

The works \cite{Seis17,Seis18} provide a new quantitative approach to the advection equation \eqref{eq:transport} in the DiPerna--Lions setting. The approach not only rediscovers most of the results from the original paper \cite{DiPernaLions89} but also offers sharp stability estimates that extend to situations inaccessible by the traditional renormalization approach. 
A typical estimate in this context compares two distributional solutions $\theta_1,\theta_2\in L^\infty((0,T);L^q(\rd))$ of the Cauchy problem \eqref{eq:transport} corresponding to two different weakly compressible velocity fields $u_1,u_2\in L^1((0,T);L^p(\rd))$, respectively, where $1/p+1/q=1$. Then it holds,
\begin{equation}
	\sup_{t\in(0,T)}\mathcal{D}_\delta(\theta_1(t,\cdot),\theta_2(t,\cdot)) \lesssim \|\nabla u_1\|_{L^1(L^p)}\bigl(\|\theta_1\|_{L^\infty(L^q)}+\|\theta_2\|_{L^\infty(L^q)}\bigr) + 1,
	\label{eq:first_estimate}
\end{equation}
provided that $u_1\in L^1((0,T);W^{1,p}(\rd))$ and where $\delta = \|u_1-u_2\|_{L^1(L^p)}$ is the distance of the velocity fields. The quantity $\mathcal{D}_\delta(\cdot,\cdot)$ on the left-hand side is a Kantorovich--Rubinstein distance associated to a logarithmically increasing cost, originally arising in optimal transportation theory and defined as
\begin{equation}
	\mathcal{D}_\delta(\mu,\nu) = \inf_{\pi\in\Pi(\mu,\nu)} \iint_{\rd\times\rd} \log\biggl(\frac{|x-y|}{\delta} + 1 \biggr) \, d\pi(x,y).
	\label{eq:Ddelta}
\end{equation}
Here, $\mu,\nu$ are finite measures on $\rd$ such that $\mu[\rd]=\nu[\rd]$ and $\Pi(\mu,\nu)$ is the set of couplings, i.e., measures on $\rd\times \rd$ such that $\pi[A\times \rd] =\mu[A]$ and $\pi[\rd\times A]=\nu[A]$ for all measurable $A\subset\rd$. The parameter $\delta>0$ here plays a crucial role because it can be understood as the rate of convergence between the two densities $\theta_1$, $\theta_2$ in terms of some parameter (the $L^1((0,T);L^p(\rd))$ distance between $u_1$ and $u_2$ in this case). Please see Section~\ref{s:opt} for more details about this topic and related optimal transport distances. 

The first version of the stability estimate \eqref{eq:Ddelta} was introduced in \cite{BOS}, and we shall comment briefly on its origin. The fact that there are logarithmic distances appearing is not really surprising since they are already present at the level of the flow. Consider the ODE associated to the transport equation without diffusion, that is the \emph{Lagrangian setting}, as the equation for the flow $\phi_t:\rd\rightarrow\rd$ with $t\in(0,T)$,
\begin{equation}
	\left\lbrace
	\begin{array}{l}
		\partial_t\phi_t = u_t\circ \phi_t, \\
		\phi_0 = \id.
	\end{array}
	\right.
	\label{eq:ode}
\end{equation}
Then, for two solutions of the flow equation \eqref{eq:ode} there is also the following logarithmic stability estimate that can be verified straightforwardly,
\begin{equation}
	\log\left(\frac{|\phi^1_t(x)-\phi^2_t(x)|}{\delta} + 1\right) \lesssim \|\nabla u_1\|_{L^1(L^\infty)} + 1,
	\label{eq:log_ode}
\end{equation}
provided that $\delta = \|u_1-u_2\|_{L^1(L^\infty)}$ and that $u_1$ is Lipschitz.
Analogous results in the DiPerna--Lions setting for the non-diffusive case have been proved by Crippa and De Lellis \cite{CrippaDeLellis08} if $p>1$ by replacing the  uniform bounds in the space variable by suitable integral averages. Unfortunately, due to technical limitations, it is currently unclear how to extend these \emph{optimal} stability estimates to the case  $p=1$. There are, however, non-optimal extensions of the Crippa--De Lellis theory to the case $p=1$ by Jabin \cite{Jabin10} and to lower regular vector fields, namely vector fields whose gradient is given by a singular integral of an $L^1$ function, by Bouchut and Crippa \cite{BouchutCrippa13}.

The ordinary differential equation of the flow \eqref{eq:ode} can be related to the transport equation~\eqref{eq:transport} through the method of characteristics,
\begin{equation*}
	\theta(t,\cdot) = (\phi_t)_\# \theta^0,
\end{equation*}
and therefore the existence of an analogy between \eqref{eq:first_estimate} and \eqref{eq:log_ode} is not unexpected.

The stability estimates for the transport equation \cite{Seis17,Seis18} turned out to be quite flexible. There are actually many applications for which the optimal estimate plays a crucial role, for example in coarsening and mixing problems \cite{BOS,OSS,Seis13b,Seis20} or when studying error estimates for numerical approximations \cite{SchlichtingSeis17,SchlichtingSeis18}. Moreover, stability estimates are successfully extended to certain settings, in which renormalization in the sense of DiPerna and Lions could not be established directly \cite{CrippaNobiliSeisSpirito17,ClopJylhaMateuOrobitg19}. As a consequence, new well-posedness results are proven. In the stochastic setting, (sub-optimal) stability estimates were derived in \cite{LiLuo19,FathiMikulincer20}.

In the present paper, we intend to generalize these optimal stability estimates to transport equations with diffusivity, i.e., $\kappa>0$. While the regularizing effect of the diffusion might on a qualitative level indicate that estimates holding for the transport equation \eqref{eq:transport} carry over to the advection-diffusion equation \eqref{eq:adv-diff}, the adaptation of the mathematical proofs is not straightforward. As it turns out, our analysis is limited to advection-diffusion equations with constant diffusivities, while qualitative results are available for more general diffusions \cite{LeBrisLions08,Figalli08,LeBrisLions2019}.
Nonetheless, these estimates are an important contribution to the existing theory. For instance, optimal bounds on convergence rates of numerical approximations become accessible for the first time in the DiPerna--Lions setting. We are currently addressing these in an upcoming work \cite{NavarroFernandezSchlichting21}. Moreover, the new bounds are potentially applicable in the study of mixing problems in fluid dynamics, see also \cite{Seis13b,Seis20} for related work. In addition, our estimate extends the existing well-posedness theory for \eqref{eq:adv-diff} to fluids with an $L^1$ vorticity, $\grad \times u \in L^1$, which are of relevance, for instance, in the study of the $2D$ Euler and Navier--Stokes equations \cite{CrippaNobiliSeisSpirito17,NussenzveigLopesSeisWiedemann20,CiampaCrippaSpirito20}.

We finally mention that quantitative estimates on advection-diffusion equation were also obtained in the recent works \cite{BrueNguyen2021,Seis20}. These, however, focus on quantifying the vanishing diffusivity limit with applications to mixing.

\medskip

\emph{Throughout this paper we will use the following notation:} For any $A\subseteq X$ we denote its complementary as $A^c = X\setminus A$. We designate $\mathcal{L}^d$ to the Lebesgue measure on $\rd$. For any $a\in\rd$ and $r>0$, we denote $B_r(a)$ to the ball of radius $r$ centered at $a$. We write the positive and negative parts of a function as $f^+(x) = \max\{f(x),0\}\geq 0$ and $f^-(x) = -\min\{f(x),0\}\geq 0$ respectively, so that $f=f^+-f^-$ and $|f|=f^++f^-$. If a function $f$ is defined in a space $X$ and it holds $f\in B(X)$ for any $B$ Banach space, we can write without loss of generality $f\in B$. If it is unclear, however, we will explicitly write $f\in B(X)$. In addition, there will be many occasions with $X=(0,T)\times\rd$, so for any Banach spaces $B_1$ and $B_2$ we denote $f\in B_1(B_2)$ for $f\in B_1((0,T);B_2(\rd))$. For any vector function $f:X\rightarrow\rd$ that lives in a Banach space $B$, we address $f\in B(X)$ for $f\in B(X)^d$. Moreover, the symbol $\#$ means the push-forward and we write $a \lesssim b$ when there is a constant $C>0$ such that $a \leq Cb$. The constant might depend on many quantities that are not relevant for the estimate that we are dealing with.

\medskip

\emph{The article is organized as follows:} In the next section, Section \ref{S:results}, we state the precise definitions and present and discuss our main results.  The optimality of our Theorem \ref{thm:1} is questioned in Section \ref{s:optimality}. Section \ref{s:opt} provides a brief background on tools from the theory of optimal transportation used in our proofs. In Section \ref{s:main}, we present the proof of our general stability estimate in Theorem \ref{thm:1}. The final Section \ref{ss:uniqueness} contains a uniqueness result for vector fields with $L^1$ vorticites, Theorem \ref{thm:2}.

\section{Main results}\label{S:results}

Our first main result of this paper concerns a stability estimate for the advection-diffusion equation~\eqref{eq:adv-diff} using the optimal transportation distance~\eqref{eq:Ddelta}. For this, we are considering precisely the   DiPerna--Lions setting \cite{DiPernaLions89} that we introduced before, that is, for the velocity field, we impose   Sobolev regularity in the spatial variable,
\begin{equation}
	u\in L^1((0,T);W^{1,p}(\rd)) \qquad\text{ for some  }  p\in (1,\infty],
	\label{eq:dpl_setting}
\end{equation}
while for the initial datum, we demand some integrability,
\begin{equation}
	\theta^0\in (L^1\cap L^q)(\rd) \qquad \text{ with }  q>1 \mbox{ such that }\frac1p + \frac1q\le 1.
	\label{eq:initial}
\end{equation}
Working in the full space requires to suppose some additional decay properties, for instance, in order to 
ensure that the logarithmic Kantorovich--Rubinstein norms $D_{\delta}(\theta^0)$ are finite for \emph{any finite}~$\delta$. We achieve this by additionally assuming that $\theta^0$ has finite first moments,
\begin{equation}\label{100}
\intrd |x|\;|\theta^0(x)|\, dx <\infty.
\end{equation}
In this setting, it can be established that (smooth) solutions have $L^1$ temporal-spatial gradients as will be outlined in Remark \ref{R1} below. 
On bounded domains, the latter is always true as a consequence of the a priori estimates in \eqref{eq:apriori_1}.

Let us now present the precise statement.

\begin{theorem}\label{thm:1}
	Let $u_1$ and $u_2$ be two vector fields satisfying \eqref{eq:dpl_setting}, and let $\theta_1^0$ and $\theta_2^0$ be two initial data such that \eqref{eq:initial} holds. Let $\kappa_1$ and $\kappa_2$ be two positive constants. Then, for any two distributional solutions  $\theta_1,\theta_2 \in L^\infty(L^1\cap L^q)$ to the advection-diffusion equation \eqref{eq:adv-diff} corresponding to $\kappa_1$, $u_1$, $\theta_1^0$ and $\kappa_2$, $u_2$, $\theta_2^0$, respectively, which satisfy $\theta_1,\theta_2 \in L^1(W^{1,1})$, the following stability estimate holds
	\begin{equation}
		\sup_{0\leq t\leq T}\mathcal{D}_\delta(\theta_1(t,\cdot),\theta_2(t,\cdot)) \lesssim \mathcal{D}_\delta(\theta^0_1,\theta^0_2) + 1 + \frac{\|u_1-u_2\|_{L^1(L^p)}+|\kappa_1-\kappa_2| \;\|\nabla\theta_2\|_{L^1(L^1)}}{\delta}.
		\label{eq:stab_est}
	\end{equation}
\end{theorem}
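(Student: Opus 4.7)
The plan is to adapt the optimal-transport stability strategy of \cite{Seis17,Seis18} for the pure transport equation by reducing the unequal-diffusion problem to an equal-diffusion computation. The key observation is that, using the identity $\theta_2\nabla\log\theta_2 = \nabla\theta_2$, the density $\theta_2$ also solves the advection-diffusion equation with diffusion $\kappa_1$ (matching $\theta_1$'s diffusion) once its velocity is modified:
\begin{equation*}
\partial_t\theta_2 + \nabla\cdot(\tilde u_2\,\theta_2) = \kappa_1\Delta\theta_2,
\qquad \tilde u_2 := u_2 - (\kappa_2-\kappa_1)\nabla\log\theta_2.
\end{equation*}
After a standard mollification reducing everything to smooth positive solutions (signed densities handled by Hahn decomposition and mass equalization; the degeneracy of $\nabla\log\theta_2$ on $\{\theta_2=0\}$ treated by approximating $\theta_2 \rightsquigarrow \theta_2+\varepsilon$ and taking the final limit), we are in effect in the equal-diffusion setting with velocity pair $(u_1,\tilde u_2)$.

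I then couple $\theta_1$ and $\theta_2$ via a measure $\pi(t,x,y)\geq 0$ on $\R^d\times\R^d$, initialized as an optimal coupling $\pi_0$ for $\mathcal{D}_\delta(\theta_1^0,\theta_2^0)$ and evolving by
\begin{equation*}
\partial_t\pi + \nabla_x\cdot(u_1(x)\pi) + \nabla_y\cdot(\tilde u_2(y)\pi) = \kappa_1(\nabla_x+\nabla_y)^2\pi.
\end{equation*}
The operator $(\nabla_x+\nabla_y)^2$ encodes a synchronous coupling of the Brownian motions driving the two diffusions; a direct integration in $y$ and in $x$ confirms that the marginals remain $\theta_1$ and $\theta_2$ for all times, so $\pi(t,\cdot,\cdot)\in\Pi(\theta_1(t,\cdot),\theta_2(t,\cdot))$. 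Crucially, the identity $(\nabla_x+\nabla_y)h(x-y)=0$ for the cost $h(z):=\log(|z|/\delta+1)$ annihilates the diffusion contribution after integration by parts, leaving
\begin{equation*}
\frac{d}{dt}\int h(x-y)\,d\pi = \int \nabla h(x-y)\cdot \bigl(u_1(x)-\tilde u_2(y)\bigr)\,d\pi.
\end{equation*}

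The right-hand side is split via $u_1(x)-\tilde u_2(y) = (u_1(x)-u_1(y)) + (u_1-u_2)(y) + (\kappa_2-\kappa_1)\nabla\log\theta_2(y)$ into three pieces. The first is handled by the Crippa--De Lellis-type bound $|u_1(x)-u_1(y)|\lesssim|x-y|(M\nabla u_1(x)+M\nabla u_1(y))$ paired with $|\nabla h|\leq 1/(|x-y|+\delta)$ and H\"older's inequality on the marginals under $\frac{1}{p}+\frac{1}{q}\leq 1$, producing a time-integrated contribution absorbed into the $+1$. The second, via $|\nabla h|\leq 1/\delta$ and H\"older, yields $\|u_1-u_2\|_{L^1(L^p)}/\delta$. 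The third --- the heart of the argument --- combines $|\nabla h|\leq 1/\delta$, the identity $|\nabla\log\theta_2|\theta_2 = |\nabla\theta_2|$, and the fact that the $y$-marginal of $\pi$ is $\theta_2$ to give
\begin{equation*}
(\kappa_2-\kappa_1)\int\nabla h(x-y)\cdot\nabla\log\theta_2(y)\,d\pi \leq \frac{|\kappa_2-\kappa_1|}{\delta}\int|\nabla\theta_2(y)|\,dy,
\end{equation*}
which is precisely the $|\kappa_1-\kappa_2|\|\nabla\theta_2\|_{L^1(L^1)}/\delta$ contribution after integrating in time. Finally, taking the infimum over couplings on the left of the resulting bound $\int h(x-y)\,d\pi(t,\cdot)\leq \mathcal{D}_\delta(\theta_1^0,\theta_2^0) + (\text{RHS terms})$ yields the theorem. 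The main technical obstacle is the rigorous handling of $\nabla\log\theta_2$ where $\theta_2$ vanishes; this is defused by the $\varepsilon$-regularization, since only the always well-defined product $\theta_2\nabla\log\theta_2 = \nabla\theta_2 \in L^1$ ever enters the final estimate. The remaining issues --- justifying the coupling PDE for the non-Sobolev vector field $\tilde u_2$ (whose gradient is never used), and reducing from signed to non-negative densities --- are comparatively routine.
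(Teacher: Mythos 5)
Your route is genuinely different from the paper's. The paper works entirely in the dual: it differentiates $t\mapsto\mathcal{D}_\delta(\theta_1(t,\cdot),\theta_2(t,\cdot))$ using the Kantorovich potential $\zeta_t$ (Lemma~\ref{lem:dtD}), splits the derivative into an advection term, a common-diffusion term, and a $\kappa$-difference term, and kills the common-diffusion term by a finite-difference discretization of the Laplacian combined with the optimality of $\zeta_t$ at the three shifted points (yielding $\Theta_2\le 0$). You instead work in the primal, evolving a coupling in $\R^{2d}$ by a synchronous (degenerate) diffusion so that the common-diffusion contribution cancels exactly via $(\nabla_x+\nabla_y)h(x-y)=0$, after first absorbing the diffusivity mismatch into the drift through $\tilde u_2=u_2-(\kappa_2-\kappa_1)\nabla\log\theta_2$. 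Your treatment of the advection term (Crippa--De Lellis maximal-function bound plus $|\nabla h|\le 1/(|x-y|+\delta)$) and of the $\kappa$-difference term ($|\nabla h|\le 1/\delta$ together with $\theta_2|\nabla\log\theta_2|=|\nabla\theta_2|\in L^1$) coincides with the paper's estimates for $\Theta_1$ and $\Theta_3$, so all three final contributions match.

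The genuine gap is in the step you call comparatively routine: identifying the marginals of the evolved coupling with the \emph{given} solutions. Integrating your coupling PDE in $y$ (resp.\ $x$) shows only that the marginals solve the same Cauchy problems as $\theta_1$ and $\theta_2$; to conclude that they \emph{equal} $\theta_1$ and $\theta_2$ you need uniqueness for the advection--diffusion equation with drift $u_1$, and worse, with the drift $\tilde u_2$, which is not in any DiPerna--Lions class (only $\theta_2\tilde u_2\in L^1$ is controlled). But uniqueness in this generality is precisely a consequence of the theorem, so the argument is circular as stated. Retreating to mollified data and re-solving does not repair this: the theorem is an a priori estimate for \emph{arbitrary} distributional solutions $\theta_1,\theta_2\in L^\infty(L^1\cap L^q)\cap L^1(W^{1,1})$, whereas the mollification limit produces only one particular solution, which you cannot identify with the given one without — again — uniqueness. (Mollifying the solutions themselves instead reintroduces the DiPerna--Lions commutator, a separate and nontrivial issue.) This is exactly the difficulty the dual approach sidesteps: the Kantorovich potential is an admissible test function in the weak formulation of the given solutions, so no auxiliary object with prescribed marginals ever needs to be constructed. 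A secondary, fixable point: the reduction from signed to nonnegative densities cannot be done by Hahn-decomposing the solutions (the positive part of a solution is not a solution); one must decompose at the level of initial data and invoke linearity, which again presupposes a uniqueness statement unless handled with care.
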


Our estimate is optimal in many regards, as will be discussed in some detail in Section \ref{s:optimality} below.

It is not difficult to see that the stability estimate implies uniqueness. Indeed, suppose $\theta_1$ and $\theta_2 $ are two solutions to the advection-diffusion equation with the same velocity, initial data, and diffusivity constant. In that case, the right-hand side becomes independent of $\delta$ and letting $\delta \to 0$; the left-hand side would blow up except if both solutions are identical. The argument could be made rigorous, for instance, by a straightforward application of Lemma \ref{lemma:bound_otd} below, and we will elaborate on this principle in the proof of Theorem~\ref{thm:2}.
We thus recover the uniqueness results for distributional solutions from the original paper by DiPerna and Lions \cite{DiPernaLions89} in a new quantitative way.

The implicit constant in the stability estimate \eqref{eq:stab_est} depends on the $L^{\infty}(L^1)$ and $L^{\infty}(L^q)$ norms of both solutions, but only on the $L^1(L^p)$ norm of one of the velocity gradients and the $L^1(L^1)$ norm of one of the solutions gradients. Consequently, it would be enough to assume such regularity for one of the vector fields and one of the solutions, and we would get an estimate on the distance between the unique solution and a non-unique approximant. Furthermore, our analysis applies also to the situation where one of the diffusivity constants depends on $x$, in which case the modulus of the difference of the diffusivity constants needs to be replaced by $\|\kappa_1 - \kappa_2(x)\|_{L^{\infty}}$. 

We finally remark on our hypotheses in Theorem \ref{thm:1}. For initial data in \eqref{eq:initial} and velocity fields in~\eqref{eq:dpl_setting}, the existence of distributional solutions in $L^{\infty}(L^1\cap L^{q})$ is easily established with the help of the a priori estimates in \eqref{eq:apriori_1} via smooth approximation, \emph{provided} that the velocity field satisfies the weak compressibility condition in \eqref{eq:near_incompressible}. In the following remark, we comment on the gradient condition.
\begin{remark}\label{R1}
The regularity assumption $\grad \theta \in L^1(L^1)$ that is assumed in Theorem \ref{thm:1} is satisfied for finite entropy solutions as long as the velocity verifies the weak compressibility condition in \eqref{eq:near_incompressible}. Indeed, if $\theta$ is a nonnegative solution with
\begin{equation}\label{101}
\intrd \theta(t,x) \log \theta(t,x) \, dx \in \R\qquad \text{ for all } t\in [0,T],
\end{equation}
a standard computation reveals that
\[
\intrd \theta(t) \log \theta(t) \, dx + \kappa \int_0^t \intrd \frac{|\grad \theta|^2}{\theta}\, dxdt \le \intrd \theta^0 \log\theta^0\, dx +\|(\div u)^-\|_{L^1(L^{\infty})} \|\theta\|_{L^{\infty}(L^1)},
\]
and thus, the Fisher information $\|\theta^{-1}|\grad\theta|^2\|_{L^1}$ is integrable in time. Moreover, by H\"older's inequality, we obtain
\[
\int_0^t  \|\grad\theta\|_{L^1}^2 dt \le \int_0^t \|\theta\|_{L^1} \|\theta^{-1}|\grad\theta|^2\|_{L^1}dt \le \|\theta\|_{L^{\infty}(L^1)}  \int_0^t \|\theta^{-1}|\grad\theta|^2\|_{L^1}dt ,
\] 
which is finite by \eqref{eq:apriori_1}, \eqref{101}, and the above estimate on the entropy. We easily deduce that $\grad\theta \in L^1(L^1)$ on finite time intervals, such that the last term in the right hand side of~\eqref{eq:stab_est} can be estimated by
\[
 \frac{|\kappa_1-\kappa_2| \;\|\nabla\theta_2\|_{L^1(L^1)}}{\delta} \lesssim \frac{|\kappa_1-\kappa_2|}{\delta \, \kappa_2} .
\]
It remains to understand that in the setting \eqref{eq:near_incompressible}, \eqref{eq:dpl_setting}, \eqref{eq:initial}, \eqref{100}, solutions do indeed have finite entropy for finite times \eqref{101}. 
An upper bound is provided by the elementary estimate $r\log r\lesssim r + r^q$ for any $r>0$ and the integrability assumptions on $\theta$. For the lower bound, we first notice that the moment bound in \eqref{100} is propagated in time. In fact, since $\theta^0\in L^1$ by assumption, the homogeneous weight $|x| $ in \eqref{100} can be replaced by the smoother $\sqrt{1+|x|^2}$, and we have the estimate
\[
\intrd \sqrt{1+|x|^2} \theta(t,x)\, dx \lesssim \intrd \sqrt{1+|x|^2} \theta^0(x)\, dx + \|u\|_{L^1(L^p)}\|\theta\|_{L^{\infty}(L^q)} + \kappa \|\theta\|_{L^1}.
\]
Now, since $r\log r\gtrsim - r^{\alpha}$ for any $\alpha\in (0,1)$ and any $r>0$, we conclude that the lower bound
\[
\intrd \theta \log \theta\, dx \gtrsim - \intrd\theta^{\alpha}\, dx \gtrsim -\left(\intrd \frac1{\sqrt{1+|x|^2}^{\frac{\alpha}{\alpha-1}}} \, dx\right)^{1-\alpha} \left(\intrd \sqrt{1+|x|^2}\, \theta\, dx\right)^{\alpha}
\]
is finite as long as $\alpha >\frac{d}{d+1}$.
\end{remark}
We complete the discussion of Theorem \ref{thm:1} with a comment on the integrability restriction on the velocity gradient in~\eqref{eq:dpl_setting}. A crucial step in the derivation of the stability estimate  is controlling the advection term via  an argument that was introduced by Crippa and De Lellis in \cite{CrippaDeLellis08}. The argument makes use of the Hardy--Littlewood maximal function and exploits its continuity in $L^p$ spaces for $p>1$. The restriction in \eqref{eq:dpl_setting} relies precisely on this limitation. See Section \ref{s:main} for details.

In our second theorem, that we shall motivate in the following, we use a suitable extension of the Crippa--De Lellis method to vector fields whose gradient is given by a singular integral of an $L^1$ function. A typical example of such a vector field is the velocity field that is obtained from an $L^1$ vorticity with the help of the Biot--Savart law.

The precise setting is as follows. We assume that the velocity components $u_1,\dots,u_d$ have  kernel representations,
\begin{equation}
	u_i = k_i \ast\omega_i , 	\label{eq:Hu2}
\end{equation}
for any $i\in\{1,\dots,d\}$,  where the generalized vorticity components are merely integrable,
\begin{equation}\label{103}
\omega_i\in L^1((0,T);L^1(\R^d)),
\end{equation}
and where the kernels $k_i$ are such that any of its derivatives $\partial_j k_i$ is a \emph{singular kernel}. More precisely, we suppose that \begin{enumerate}[label={\textup{(k$_{\arabic*}$)}}]
	\item\label{ass:k1} $k\in \mathcal{S}'(\rd)$, where $\mathcal{S}'(\rd)$ is the dual of the Schwartz space;
	\item $k|_{\rd\setminus\{0\}}\in C^2(\rd\setminus\{0\})$;
	\item for $\alpha\in \mathbb{N}_0^d$ with $|\alpha|\leq 2$ it holds
	\begin{equation*}
		|D^\alpha k(x)| \lesssim \frac{1}{|x|^{d-1+|\alpha|}}, \qquad \forall x\neq 0 ;
	\end{equation*}
	\item\label{ass:k4} it holds
	\begin{equation*}
		\left| \int_{R_1<|x|<R_2}\nabla k(x)dx \right| \lesssim 1 \qquad \text{for every } 0<R_1<R_2<\infty.
	\end{equation*}
\end{enumerate}
Under these conditions, the velocity field only lives in a \emph{weak} Lebesgue space globally in $\rd$,  see Lemma \ref{L100} below. For mathematical convenience, we will enforce the slightly stronger condition
\begin{equation}
	u\in L^{p,\infty}((0,T)\times\rd),
	\label{eq:Hu1}
\end{equation}
for some $p>1$. For the above mentioned applications in fluid dynamics, such a condition is always satisfied. We recall that  weak Lebesgue spaces $L^{p,\infty}$ can be defined on a measure space $(X,\mu)$ via the quasi-norm
\begin{equation}
	\|f\|_{L^{p,\infty}(\mu)} = \sup_{\lambda>0} \left( \lambda^p\mu(\{x\in X:|f(x)|>\lambda\}) \right)^{1/p},
	\label{eq:weakLp}
\end{equation}
for any measurable $f:X\rightarrow \mathbb{R}$. In the case $p=\infty$, we adopt the convention $L^{\infty,\infty} = L^\infty$. Notice that $\|\cdot\|_{L^{p,\infty}}$ is not a norm since it does not verify the triangle inequality. Also recall that there is an embedding $L^p\subset L^{p,\infty}$ with $\|f\|_{L^{p,\infty}} \leq \|f\|_{L^p}$ for every $f\in L^p$.

As our goal is to derive a full well-posedness theorem, and not only a uniqueness result, we shall, in addition, assume that the velocity field satisfies the weak compressibility condition \eqref{eq:near_incompressible}.

Finally, we work with with initial datum that are integrable and bounded,
\begin{equation}
	\theta^0 \in (L^1\cap L^\infty)(\rd),
	\label{eq:Hid}
\end{equation} 
and that have finite first moments, i.e., \eqref{100} holds.

With this list of assumptions, we can state our second main result.
\begin{theorem}\label{thm:2}
	Let $u$ be a velocity field satisfying \eqref{eq:Hu2}, \eqref{103}, \eqref{eq:Hu1}, and \eqref{eq:near_incompressible}, and let $\theta^0$ be an initial datum verifying \eqref{eq:Hid} and~\eqref{100}. Then the Cauchy problem \eqref{eq:adv-diff} has a unique distributional solution $\theta(t,x)$ in the class $L^\infty((0,T);(L^1\cap L^\infty)(\rd))$ with $\grad \theta\in L^1((0,T)\times \R^d)$.
	\label{thm:exist_uniq}
\end{theorem}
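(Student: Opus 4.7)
The strategy is the classical one for low-regularity transport equations: construct solutions via smooth approximation, and obtain uniqueness as a direct consequence of a stability estimate in the Kantorovich--Rubinstein distance $\mathcal{D}_\delta$, adapted to the present singular-integral setting.

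\textbf{Existence.} I would mollify the vorticities, setting $\omega^{\eps}_i=\omega_i\ast\rho_{\eps}$ and $u^{\eps}_i=k_i\ast\omega^{\eps}_i$, obtaining smooth vector fields~$u^{\eps}$ that converge to~$u$ in $L^1_{\mathrm{loc}}$ and inherit the weak-compressibility bound~\eqref{eq:near_incompressible} up to vanishing error (via the kernel assumptions \ref{ass:k1}--\ref{ass:k4}). Classical parabolic theory then supplies smooth solutions $\theta^{\eps}$ of~\eqref{eq:adv-diff} with initial datum $\theta^0$ (also mollified if needed). The a priori estimate~\eqref{eq:apriori_1} with $q=1$, combined with the maximum principle using~\eqref{eq:Hid}, yields uniform bounds in $L^{\infty}(L^1\cap L^{\infty})$; the entropy/Fisher-information argument of Remark~\ref{R1}---whose hypotheses are guaranteed by~\eqref{100} and~\eqref{eq:Hid}---gives in addition the uniform bound $\grad\theta^{\eps}\in L^1(L^1)$.

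\textbf{Uniqueness via stability.} The core of the proof is a variant of Theorem~\ref{thm:1} valid for velocity fields of the form~\eqref{eq:Hu2}--\eqref{103}. Revisiting the argument of Section~\ref{s:main}, the only step where the Sobolev hypothesis~\eqref{eq:dpl_setting} intervenes is in controlling the advection contribution via a Hardy--Littlewood maximal function of~$\nabla u$. In the present setting I would instead invoke the Bouchut--Crippa estimate for vector fields whose gradient is a singular integral of an $L^1$ function, which supplies a pointwise two-point bound of Crippa--De Lellis type with~$M|\nabla u|$ replaced by a weak-$L^1$ maximal quantity depending only on~$\omega$. Crucially, the logarithmic weight in~$\mathcal{D}_\delta$ converts this weak-$L^1$ bound into an admissible integrable quantity, as already done in~\cite{CrippaNobiliSeisSpirito17} for the transport equation. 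The diffusion-handling portion of the proof is structurally unaffected because $\kappa_1=\kappa_2=\kappa$ here, so the last term of~\eqref{eq:stab_est} disappears. The resulting estimate has a right-hand side that depends on $\|\omega\|_{L^1(L^1)}$ and is independent of~$\delta$ once the data coincide.

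Applying this estimate to pairs $(\theta^{\eps_1},\theta^{\eps_2})$ shows that $\{\theta^{\eps}\}$ is Cauchy in $\sup_t\mathcal{D}_\delta$ with $\delta$ tending to zero in the approximation parameter; Lemma~\ref{lemma:bound_otd} then upgrades this to convergence strong enough to identify the limit as a distributional solution with the claimed regularity. Applied directly to any two solutions $\theta_1,\theta_2$ with identical data, the right-hand side becomes independent of~$\delta$, so sending $\delta\to 0$ and invoking Lemma~\ref{lemma:bound_otd} once more forces $\theta_1=\theta_2$. The hard part is constructing the singular-integral analogue of the Crippa--De Lellis two-point bound and verifying that it survives integration against the logarithmic cost in~$\mathcal{D}_\delta$---this is precisely where the weak-$L^1$ nature of singular integrals of~$L^1$ functions becomes tolerable because of the logarithmic weight---and then checking that every estimate in Section~\ref{s:main} remains uniform in~$\eps$ so that the approximation argument closes.
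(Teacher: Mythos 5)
Your overall architecture (mollify the vorticity for existence; prove uniqueness from a Kantorovich--Rubinstein stability estimate built on the Bouchut--Crippa weak maximal-function bound) matches the paper, but the uniqueness step contains a genuine gap. You assert that, once the data coincide, ``the resulting estimate has a right-hand side that \ldots is independent of $\delta$,'' and you then send $\delta\to 0$ as in the DiPerna--Lions case. This is false in the singular-integral setting, and the difference is exactly the point of the theorem. Because the Bouchut--Crippa function $G$ only splits as $G=G_\eps^1+G_\eps^2$ with $G_\eps^1$ small merely in $L^1(L^{1,\infty})$, one cannot integrate $G_\eps^1$ against the transport plan directly; one must take the pointwise minimum with $|u|/\delta$ and interpolate between the $L^{1,\infty}$ and $L^{p,\infty}$ quasi-norms. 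The resulting bound (Proposition~\ref{prop:stab_est_2}) is
\[
\sup_t \mathcal{D}_\delta(\theta(t,\cdot)) \lesssim \mathcal{D}_\delta(\theta^0) + \eps\,\|\theta\|_{L^1}\Bigl[1+\log\Bigl(\tfrac{1}{\eps\delta}\cdots\Bigr)\Bigr] + C_\eps\|\theta\|_{L^\infty(L^2)},
\]
which grows like $\eps|\log\delta|+C_\eps$ as $\delta\to0$. Uniqueness therefore does not follow from ``RHS independent of $\delta$, let $\delta\to0$''; it requires dividing by $|\log\delta|$, choosing $\eps$ small first so that the $\eps|\log\delta|$ contribution is controlled, then $\delta$ small so that $C_\eps/|\log\delta|$ is small, and only then applying Lemma~\ref{lemma:bound_otd} with $\gamma=\sqrt{\delta}$ to conclude $\mathcal{D}^b(\theta)=0$. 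Your write-up elides precisely this two-parameter limit, which is where the weak-$L^1$ nature of the singular integral is actually paid for.

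On existence, your route also diverges from the paper and is underpowered as stated: you propose showing $\{\theta^{\eps}\}$ is Cauchy in $\sup_t\mathcal{D}_\delta$, but the stability estimate available here is for a \emph{single} velocity field (it controls the difference of two solutions with the same $u$); comparing $\theta^{\eps_1}$ and $\theta^{\eps_2}$ would require a two-velocity version with a term like $\|u^{\eps_1}-u^{\eps_2}\|/\delta$ in an appropriate (only weak-$L^p$) norm, which you have not established, and Cauchy-ness in $\mathcal{D}_\delta$ alone does not identify a limit in $L^\infty(L^1\cap L^\infty)$. The paper instead argues by compactness: uniform $L^\infty(L^q)$ bounds give weak-$*$ convergence, bounds on $\partial_t\theta_\eps$ in $L^\infty(H^{-s})$ upgrade this to $C^0(\mbox{w-}L^q)$, and a separate quantitative lemma (convergence of $u_\eps=k\ast\omega_\eps$ in $L^p_{loc}$ controlled by $W_1(\omega_\eps,\omega)$) makes the product $u_\eps\theta_\eps$ pass to the limit in the distributional formulation. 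You should either adopt that compactness argument or supply the missing two-field stability estimate.
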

A corresponding  result  on uniqueness for the transport equation \eqref{eq:transport} was previously derived in \cite{CrippaNobiliSeisSpirito17}, which in turn builds up on the Lagrangian setting considered in \cite{BouchutCrippa13}. Here, we develop an analogous theory for the diffusive case $\kappa>0$. The result is, of course, not unexpected since the diffusive equation is usually considered to produce even smoother solutions. However, we are currently not aware of any   techniques, apart from those developed here, in which such a result can be established. Moreover, we are presently unable to produce results in more general settings, for instance, for non-constant diffusivities. 

The new uniqueness result in Theorem \ref{thm:2} has potential applications in the study of the inviscid limit for the two-dimensional Navier--Stokes equations with rough forcing. We believe that thanks to  our present contribution, the recent results in \cite{NussenzveigLopesSeisWiedemann20,CiampaCrippaSpirito20} can be extended to such an interesting setting.

Finally, in Theorem \ref{thm:2} we add a proof of existence that is based on classical techniques. Assumptions on the advection field itself and the  weak compressibility assumption are only used to establish the existence part. 

\section{On the optimality of Theorem \ref{thm:1}}\label{s:optimality}

Because Kantorovich--Rubinstein distances metrize weak convergence, cf.~Theorem 7.12 in~\cite{Villani03}, 
the result of Theorem \ref{thm:1},
\[
		\sup_{0\leq t\leq T}\mathcal{D}_\delta(\theta(t,\cdot),\theta_n(t,\cdot)) \lesssim \mathcal{D}_\delta(\theta^0,\theta^0_n) + 1 + \frac{\|u-u_n\|_{L^1(L^p)}+|\kappa-\kappa_n|  }{\delta},
\]
cf.~\eqref{eq:stab_est}, can be considered as an estimate on the rates of weak convergence for the advection-diffusion equation for three types of approximations: The convergence of solution sequences corresponding to converging sequences of initial data $\theta^0_n \to \theta^0$, velocity fields $u_n\to u$ and diffusivity constants $\kappa_n\to \kappa$. Here, an interesting feature is that the rate of convergence is already incorporated into the distance function, and it is given by the smallest $\delta=\delta_n$ for which the right-hand side is finite, i.e.,
\[
D_{\delta_n}(\theta^0,\theta_n^0) \lesssim 1, \qquad \|u-u_n\|_{L^1(L^p)}\lesssim \delta_n,\qquad |\kappa-\kappa_n|\lesssim\delta_n.
\]

In order to discuss our convergence result for each of the coefficients individually, let us decompose the setting into three separated problems.

\medskip

\emph{Perturbations of initial data.}  We start with the situation in which $\theta_n^0\to \theta^0$, while $u_n=u$ and $\kappa_n=\kappa$. In this case, the a priori estimate \eqref{eq:apriori_1} entails that the respective solutions converge strongly if the initial data do, more precisely,
\[
\|\theta_n -\theta\|_{L^{\infty}(L^q)} \lesssim \|\theta_n^0-\theta^0\|_{L^q}.
\]
This estimate is certainly optimal, but it falls short if the convergence of the initial data needs to be measured with respect to weaker topologies, as it is the case in the analysis of numerical approximation schemes for the problem at hand. Indeed, if $\theta^0_n$ denotes a discretization of a merely integrable initial datum $\theta^0$ on a mesh of size $h=1/n$, the approximation is converging weakly with rate $1/n$, see, e.g., \cite{SchlichtingSeis17,SchlichtingSeis18} in the case of a finite volume approximation, but there is no rate of convergence in any strong Lebesgue norm. Therefore, in order to estimate the approximation error for numerical schemes for the advection-diffusion equation in the DiPerna--Lions setting, it is necessary to choose distances which metrize weak convergence such a negative Sobolev norms or Kantorovich--Rubinstein distances. For us, this is the main motivation for the derivation of Theorem \ref{thm:1}. A numerical analysis of finite volume schemes for the model under consideration is currently prepared by two of us \cite{NavarroFernandezSchlichting21}.

\medskip

\emph{Perturbations of the velocity field.} Let us now consider the case in which $u_n\to u$, while $\theta_n^0=\theta^0$ and $\kappa_n=\kappa$. We start with commenting on the non-diffusive setting $\kappa=0$ that was investigated in \cite{DeLellisGwiazdaSwierczewska16,Seis17,Seis18}. Here, a simple example of an oscillatory flow  produces oscillatory solutions which cannot converge strongly, and whose optimal order of weak convergence has been established in terms of the Kantorovich--Rubinstein distance $\mathcal{D}_{\delta}$.

In the presence of diffusion, $\kappa>0$, the situation is different as the regularizing effect of the parabolicity becomes dominant on any scale. As a consequence, whenever $u_n\to u$ in $L^1(L^p)$ and $u$ belongs to the DiPerna--Lions setting \eqref{eq:dpl_setting}, solutions do converge strongly in some Lebesgue norm, which can be verified by a straightforward application of the Aubin--Lions lemma.

It is currently unknown whether rates of strong convergence can be established in the general DiPerna--Lions setting. For vector fields belonging to the Ladyzhenskaya--Prodi--Serrin class, this is certainly true as can be seen by inspecting, for instance,  the analysis in \cite{BianchiniColomboCrippaSpinolo17}. It seems to us that outside of this class, rates of convergence can in general not exist. A rigorous proof of this conjecture, however, remains still open at this point. In view of the application of Theorem~\ref{thm:1} we have in mind, cf.~\cite{NavarroFernandezSchlichting21}, this question is of course redundant as  both velocity field and initial datum are approximated simultaneously and no strong rates of convergence can be expected for the latter approximations as outlined above.
Our forthcoming work~\cite{NavarroFernandezSchlichting21} moreover shows that the rate in~\eqref{eq:stab_est} is optimal, if $u_{n} = u_{1/h}$ is a finite volume discretization of a Sobolev vector field~$u$ on a mesh of size $h$. 

\medskip

\emph{Perturbations of diffusivity constant.} We finally consider the problem in which $\kappa_n\to \kappa$ while $\theta_n^0=\theta^0$ and $u_n=u$. At first glance, this problem seems to be purely academic, at least as long as we are not investigating the vanishing viscosity limit $\kappa_n\to0$. The latter has been studied already in \cite{Seis17,Seis18}, and it has been showed that solutions to this problem converge strongly but with no rate, while the optimal rate of weak convergence is of the order $\sqrt{\kappa}$. This limit also plays a role in the analysis of numerical approximations for the purely advective equation, $\kappa=0$, see \cite{SchlichtingSeis17,SchlichtingSeis18}.

The problem for finite diffusivity constants $\kappa_n\to\kappa>0$ is of importance in the analysis of numerical schemes for  advection-diffusion equations, where the discretization of the Laplacian produces an error that is, at least heuristically, related to a perturbation of the diffusivity.  

Stability estimates for this type of perturbations were derived earlier by Li and Luo \cite{LiLuo19}, building on the stability theory for advection equations developed in \cite{Seis17,Seis18}. In this work, the authors consider Lagrangian solutions and find only the suboptimal $\sqrt{|\kappa_n-\kappa|}$ error by treating the diffusion as a perturbation to the advection.

In the present paper, our objective is to improve  the rate of weak convergence from $\sqrt{|\kappa_n-\kappa|}$ to $|\kappa_n-\kappa|$, in order to estimate optimally the numerical error induced by finite volume schemes for the advection-diffusion equation, cf.~\cite{NavarroFernandezSchlichting21}. The improvement from ``exponent $1/2$ convergence'' to ``exponent $1$ convergence'' is well-known by numerical analysts who study numerical schemes for advection-diffusion equations  in a more regular setting, see e.g.~\cite[Chapter 4]{EymardGallouetHerbin00}. It is a consequence of the smoothing effect of diffusion, which is best understood if we neglect the advection for a moment: In this case,  the zero-diffusivity limit corresponds to the convergence of smooth solutions to its \emph{rough} initial configuration, while in the case of positive diffusivity, \emph{smooth} solutions are approximated. 

Simple calculations show that under our general assumptions, the convergence of the corresponding solutions takes place in strong topologies --- this follows again from the Aubin--Lions lemma. Regarding the rate of weak convergence, the following simple example vaguely indicates that our findings are optimal. For simplicity, we consider the one-dimensional case only and a Dirac function as  initial datum. Moreover, we trade the logarithmic Kantorovich--Rubinstein distance for the Wasserstein distance $W_1$, see \eqref{eq:wasserstein} below, because computations here can be made more explicit. 
\begin{example}\label{prop:obtimality}
	Suppose that $\theta_1$ and $\theta_2$ denote the one-dimensional heat kernels corresponding to diffusivities $\kappa_1>\kappa_2>0$. Then it holds
	\begin{equation*}
		\frac{t |\kappa_1-\kappa_2|}{\sqrt{\kappa_1}+\sqrt{\kappa_2}}\lesssim W_1\bigl(\theta_1(t,\cdot),\theta_2(t,\cdot)\bigr).
	\end{equation*}
Indeed, since the Wasserstein distance is the supremum over all Lipschitz functions $\psi(x)$ with Lipschitz constant bounded by $1$, cf.~\eqref{eq:wasserstein} below, we can consider the specific function $\psi(x) = |x|$ to produce an explicit lower bound. Then, by means of the change of variables $x = 2\sqrt{t\kappa_i}y$ for $i=1,2$ we obtain
\begin{equation*}
	\begin{split}
		W_1(\theta_1(t,\cdot),\theta_2(t,\cdot)) & \geq \intrd(\theta_1(t,x)-\theta_2(t,x))\psi(x)\, dx \\
		& = \frac{1}{(4\pi\kappa_1 t)^{d/2}}\intrd |x|e^{-\frac{|x|^2}{4\kappa_1 t}}dx - \frac{1}{(4\pi\kappa_2 t)^{d/2}}\intrd |x| e^{-\frac{|x|^2}{4\kappa_2 t}}dx \\
		& = \frac{2\sqrt{t}}{\pi^{d/2}}(\sqrt{\kappa_1}-\sqrt{\kappa_2})\intrd |y|e^{-|y|^2}dy \gtrsim \frac{\sqrt{t}}{\sqrt{\kappa_1}+\sqrt{\kappa_2}}(\kappa_1-\kappa_2).
	\end{split}
\end{equation*}
\end{example}
Of course, despite the fact that the Kantorovich--Rubinstein distance $\mathcal{D}_{\delta}$ and the Wasserstein distance $W_1$ both metrize weak convergence, we are aware of the fact that they are not equivalent and thus  the actual rate of convergence might be different. 
We have the one-sided estimate $\mathcal{D}_{\delta} \le W_1/\delta$ which follows from linearizing the logarithm. However, the opposite estimate, that would be desirable here, does not hold true. Moreover, also by direct calculations, is not clear to us how to extend the example to the $\mathcal{D}_{\delta}$ distance.
Nonetheless, we rush ahead with the bold assertion that our findings are optimal \emph{morally}.

\section{Optimal transportation distances and distributional solutions}
\label{s:opt}

In this section we introduce some tools from the theory of optimal transportation that will be useful in the poofs.
In particular, we show how the distance~\eqref{eq:Ddelta} and related distances change under the advection-diffusion equation~\eqref{eq:adv-diff} (see Lemma~\ref{lem:dtD}).
We decide to make a presentation suitable for our needs, in a rather smooth setting that is enough for our purposes. 
For generalizations and detailed proofs of the subsequent results, we refer to Villani's monograph~\cite{Villani03}.

We consider the transport of nonnegative densities $\mu,\nu \in L^1_+(\rd)=\{u\in L^1(\rd): u\geq 0\}$. We recall that $\Pi(\mu,\nu)$ denotes the set of all transport plans between two densities $\mu$ and $\nu$ with $\mu[\rd]=\nu[\rd]\in \mathbb{R}$. That is, $\pi\in\Pi(\mu,\nu)$ if it is a measure on $\rd\times\rd$ such that $\pi[A\times\rd] = \mu[\rd]$ and $\pi[\rd\times A] = \nu[A]$ for all measurable sets $A\subseteq\rd$. 
The cost for transporting mass over a distance $z$ is modeled by a continuous nondecreasing cost function $c:[0,\infty) \to [0,\infty)$. The general optimal transportation problem, or \emph{Kantorovich problem}, consists of attaining an optimal transport plan that minimizes the total transportation cost. Therefore the minimal transportation cost is given by
\begin{equation}
\mathcal{D}_c(\mu,\nu) = \inf_{\pi\in\Pi(\mu,\nu)} \iint_{\rd\times\rd} c(|x-y|)d\pi(x,y).
\label{eq:ot_primal}
\end{equation}
Physically or economically speaking we could say that $\mathcal{D}_c(\mu,\nu)$ measures the minimal total cost of transporting an initial configuration of mass or goods given by $\mu$ to a final configuration $\nu$ if the cost of the transport of an infinitesimal part is modelled by $c$.

Here we will consider cost functions $c:[0,\infty)\rightarrow[0,\infty)$ that are strictly concave, Lipschitz with uniform Lipschitz constant $L_c$ and such that $c(0)=0$. This type of cost functions induces a metric $d(x,y) = c(|x-y|)$ on $\rd$. Moreover, as it is proved in \cite[Theorem 1.14]{Villani03}, in this case the optimal transportation problem \eqref{eq:ot_primal} admits a dual formulation,
\begin{equation}
\mathcal{D}_c(\mu,\nu) = \sup_{\zeta:\rd\rightarrow \mathbb{R}} \left\lbrace \intrd (\mu(x)-\nu(x))\zeta(x) dx : |\zeta(x)-\zeta(y)| \leq c(|x-y|) \right\rbrace.
\label{eq:ot_dual}
\end{equation}
Note then that $\mathcal{D}_c(\mu,\nu)$ is a transshipment cost that only depends on the difference $\mu-\nu$.
This allows us to extend the theory to densities that are not necessarily nonnegative but just that verify $\mu[\R^d]=\nu[\R^d]\in \R$. 
In addition, because $d(x,y) = c(|x-y|)$ is a metric on $\rd$, $\mathcal{D}_c(\cdot,\cdot)$ defines a metric on the space of densities with the same total mass and it is usually referred to as the Kantorovich--Rubinstein distance or, more generally, optimal transportation distance. Therefore, for any function $\theta\in L^1(\rd)$ with zero total mass, $\theta^+[\rd]=\theta^-[\R^d]$, we conveniently define the norm
\begin{equation}
\mathcal{D}_c(\theta) =  \mathcal{D}_c(\theta^+,\theta^-).
\label{eq:ot_norm}
\end{equation}
Recall that the first problem \eqref{eq:ot_primal} admits a unique minimizer, in general $\pi_{\opt}\in\Pi(\mu,\nu)$, named the optimal transport plan. The dual problem \eqref{eq:ot_dual} also admits a maximizer, that could be nonunique $\zeta_{\opt}$, called the Kantorovich potential. It is characterized by the relation
\begin{equation*}
\zeta_{\opt}(x)-\zeta_{\opt}(y) = c(|x-y|) \qquad \text{for } d\pi_{\opt}-\text{almost all } (x,y).
\end{equation*}
We can weakly-differentiate this identity to obtain
\begin{equation*}
\nabla_x\zeta_{\opt}(x)=\nabla_y\zeta_{\opt}(y) = c'(|x-y|)\frac{x-y}{|x-y|} \qquad \text{for } d\pi_{\opt}-\text{almost all } (x,y),
\end{equation*}
and therefore it holds $|\nabla\zeta_{\opt}|\leq L_c$, since the cost function is $L_c-$Lipschitz.

There is nonetheless an additional way of presenting the optimal transport plan $\pi_{\opt}\in\Pi(\mu,\nu)$ when the cost function is strictly concave. Gangbo and McCann \cite{GangboMcCann96} proved that there exist measurable maps $T,S:\rd\rightarrow\rd$ such that it holds
\begin{equation*}
\pi_{\opt} = (\id\times T)_\# \mu = (S\times \id)_\# \nu
\end{equation*}
and where $T$ and $S$ obey the relations $\mu = S_\# \nu$, $\nu = T_\# \mu$. This characterization will be useful in the subsequent sections to prove some relevant results.

A particular example is the choice $c(z)=z$, which leads to the so-called $1$-Wasserstein distance~$W_1$, which thanks to the duality~\eqref{eq:ot_dual} can be represented in terms of $1$-Lipschitz functions as
\begin{equation}\label{eq:wasserstein}
W_1(\mu,\nu) = \sup_{\|\nabla\zeta\|_{L^\infty}\leq 1} \intrd (\mu(x)-\nu(x))\zeta(x)dx.
\end{equation}
In this paper we will consider logarithmic and bounded cost functions for the optimal transportation distance~$\mathcal{D}_c(\cdot,\cdot)$. 
First, for any $\delta>0$, we define the cost
\begin{equation}
c(z) = \log\left(\frac{z}{\delta}+1\right),
\label{eq:ot_dist}
\end{equation}
and we denote the associated Kantorovich--Rubinstein distance by $\mathcal{D}_{\delta}(\cdot,\cdot)$.
The logarithmic cost function is such that their Kantorovich potentials satisfy
\begin{equation*}
\|\nabla\zeta_{\opt}\|_{L^\infty} \leq \frac{1}{\delta}.
\end{equation*}
Secondly, we define another optimal transportation distance that will consist of a strictly concave and bounded modification of the Wasserstein distance, that is, the distance associated to the cost
\begin{equation}
c(z) = \tanh(z).
\label{eq:ot_wass_b}
\end{equation}
This distance will be denoted by $\mathcal{D}^b(\cdot,\cdot)$.

Recall that by \eqref{eq:ot_norm} both distances induce norms in the space of zero average densities. 

For every $\theta\in L^1(\rd)$ such that $\theta[\rd]=0$, $\mathcal{D}^b(\theta)$ can be controlled by~$\mathcal{D}_\delta(\theta)$ through the following Lemma, introduced and first proved in \cite{Seis17} and later adapted in \cite{CrippaNobiliSeisSpirito17} to a  framework that is similar  to ours.
\begin{lemma}[{\cite[Lemma 3.1]{CrippaNobiliSeisSpirito17}}]\label{lemma:bound_otd}
	Let $\theta\in L^1(\rd)$ be such that $\intrd\theta dx\equiv 0$. Then $\forall\gamma,\delta > 0$ it holds
	\begin{equation*}
		\mathcal{D}^b(\theta) \leq \frac{\mathcal{D}_\delta(\theta)}{\log\frac{1}{\gamma}} + \frac{\delta}{\gamma}\|\theta\|_{L^1}.
	\end{equation*}
\end{lemma}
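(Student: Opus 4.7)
The plan is to use the optimal plan associated with the logarithmic cost as a test plan for the bounded cost, and then split the domain of integration according to whether pairs $(x,y)$ are close or far apart.

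More concretely, I would first let $\pi_{\opt}\in \Pi(\theta^+,\theta^-)$ denote an optimal plan attaining the infimum in the definition of $\mathcal{D}_\delta(\theta)$, with cost $c_\delta(z)=\log(z/\delta+1)$, so that $\iint c_\delta(|x-y|)\,d\pi_{\opt}=\mathcal{D}_\delta(\theta)$. Since $\pi_{\opt}$ is also a (not necessarily optimal) transport plan between $\theta^+$ and $\theta^-$, and $\mathcal{D}^b(\theta)$ is defined as an infimum over such plans with cost $c^b(z)=\tanh(z)$, we have the one-sided estimate
\begin{equation*}
\mathcal{D}^b(\theta) \;\leq\; \iint_{\rd\times\rd}\tanh(|x-y|)\,d\pi_{\opt}(x,y).
\end{equation*}

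Next I would split the integration domain at the threshold $R=\delta/\gamma$. On the far region $\{|x-y|>R\}$, I use $\tanh(z)\leq 1$ together with the monotonicity of the logarithm to obtain
\begin{equation*}
\tanh(|x-y|)\;\leq\;\frac{\log(|x-y|/\delta+1)}{\log(1/\gamma+1)}\;\leq\;\frac{\log(|x-y|/\delta+1)}{\log(1/\gamma)},
\end{equation*}
which upon integration contributes at most $\mathcal{D}_\delta(\theta)/\log(1/\gamma)$. On the near region $\{|x-y|\leq R\}$, I use the elementary inequality $\tanh(z)\leq z$, so the contribution is bounded by $R\cdot \pi_{\opt}[\rd\times\rd]=R\|\theta^+\|_{L^1}\leq (\delta/\gamma)\|\theta\|_{L^1}$ (in fact one gets a factor $1/2$, which only improves the stated bound). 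Adding the two contributions yields the claimed inequality.

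There is no real obstacle here; the only minor subtlety is the choice of threshold $R=\delta/\gamma$, which is precisely what makes the logarithmic weight equal (up to an additive~$1$) to $\log(1/\gamma)$ at the boundary between the two regions, so the two contributions balance in the correct form. An alternative but essentially equivalent approach would be to proceed via the dual formulation~\eqref{eq:ot_dual}: compare a Kantorovich potential $\zeta^b$ for $\mathcal{D}^b$ with the logarithmic cost, noting that $\tanh(z)\leq \log(z/\delta+1)/\log(1/\gamma)+\delta/\gamma$ on $[0,\infty)$ by the same threshold analysis, but the primal splitting is the cleaner route.
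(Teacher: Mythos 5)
Your argument is correct and is exactly the standard one: the paper itself does not prove this lemma (it imports it from \cite{CrippaNobiliSeisSpirito17}), and the proof there proceeds precisely as you do, testing $\mathcal{D}^b$ with the $\mathcal{D}_\delta$-optimal plan, splitting at $|x-y|=\delta/\gamma$, and using $\tanh(z)\leq\min\{1,z\}$, with the far region controlled by monotonicity of the logarithmic cost. The only implicit restriction, present already in the statement, is $\gamma<1$ so that $\log(1/\gamma)>0$; for $\gamma\geq 1$ the first term on the right is non-positive or undefined, which is irrelevant for the application (where $\gamma=\sqrt{\delta}$ is small).
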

In the forthcoming sections we will apply these optimal transportation distances with densities which depend not only on $x\in\rd$ but also on $t\in (0,T)$. Therefore the optimal transport plans or the Kantorovich potentials might be time-dependent. In order to simplify the notation we will refer as $\pi_t$ to the 
optimal transport plan $\pi_{\opt}$ associated to  the distance $\mathcal{D}_c(\mu(t,\cdot),\nu(t,\cdot))$, with $t\in (0,T)$. Analogously we write $\zeta_
t$ to denote the Kantorovich potential $\zeta_{\opt}$ associated to the same distance.

Before proceeding with some properties of the optimal transportation distances let us recall the key concept of \emph{distributional solutions}.
\begin{definition}[Distributional solutions]
	Let $u\in L^1((0,T);L^p_{loc}(\rd))$ and $\theta^0\in L^q_{loc}(\rd)$ be given for some $p$ and $q$ such that $1/p+1/q\leq 1$. A function $\theta:(0,T)\times \rd\rightarrow \mathbb{R}$ is called a distributional solution of \eqref{eq:adv-diff} with initial datum $\theta^0$ if $\theta\in L^\infty((0,T);L^q_{loc}(\rd))$ and
	\begin{equation*}
		\iint_{(0,T)\times \rd} \theta\; \bigl(\partial_t\phi + u\cdot \nabla\phi + \kappa\Delta\phi\bigr)\, dx\,dt + \int_{\rd} \theta^0\; \phi|_{t=0}\, dx = 0
	\end{equation*}
	holds for all $\phi\in C_c^\infty([0,T)\times\rd)$.
	\label{def:distr_sol}
\end{definition}
In order to obtain stability for optimal transport distances, we need to introduce a key result regarding the differentiability properties of these distances as functions of time. In this regard we will proof the result for every cost function satisfying the assumptions of Section \ref{s:opt}, since we will need it in further sections with different cost functions.
\begin{lemma}\label{lem:dtD}
	Let $\mu_1$ and $\mu_2$ be two distributional solutions in $L^1(W^{1,1})$ of the advection-diffusion equation \eqref{eq:adv-diff} with advection fields $u_1,u_2$ and diffusion coefficients $\kappa_1,\kappa_2>0$ respectively. Then the mapping $t\mapsto \mathcal{D}_c(\mu_1(t,\cdot), \mu_2(t,\cdot))$ is absolutely continuous with
	\begin{equation}\label{eq:dtD}
	\begin{split}
		\frac{d}{dt}\mathcal{D}_c(\mu_1(t,\cdot), \mu_2(t,\cdot)) & = \intrd \nabla\zeta_t\cdot (u_1(t,x)\mu_1(t,x)-u_2(t,x)\mu_2(t,x))dx \\
		& \quad - \intrd \nabla\zeta_t \cdot(\kappa_1\nabla\mu_1(t,x)-\kappa_2\nabla\mu_2(t,x)) dx
	\end{split}
	\end{equation}
	where $\zeta_t$ is the Kantorovich potential corresponding to $\mathcal{D}_c(\mu_1(t,\cdot), \mu_2(t,\cdot))$.
	\label{lemma:dtD}
\end{lemma}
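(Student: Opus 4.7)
My starting point is the dual formulation \eqref{eq:ot_dual}. Writing $\eta(\tau):=\mu_1(\tau,\cdot)-\mu_2(\tau,\cdot)$ and using $\zeta_t$ as a competitor in the dual problem at time $s$ (and $\zeta_s$ at time $t$), I obtain the sandwich
\[
\int_{\rd}(\eta(t)-\eta(s))\zeta_s\,dx \;\le\; \mathcal{D}_c(\mu_1(t),\mu_2(t)) - \mathcal{D}_c(\mu_1(s),\mu_2(s)) \;\le\; \int_{\rd}(\eta(t)-\eta(s))\zeta_t\,dx.
\]
The idea is to rewrite both the upper and the lower bound using the PDE, take $s\to t$, and check that the two limits agree.

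For any fixed Lipschitz $\zeta$ with $|\nabla\zeta|\le L_c$, I would approximate $\zeta$ by $\zeta_\varepsilon\in C_c^\infty(\rd)$ through mollification and smooth cutoff, keeping $|\nabla\zeta_\varepsilon|\le L_c+o(1)$, and insert a test function of the form $\zeta_\varepsilon(x)\chi(\tau)$ into Definition~\ref{def:distr_sol}, where $\chi$ is a smooth approximation of the indicator of $[s,t]$. An integration by parts in the diffusion term, followed by a routine passage to the limit $\varepsilon\to 0$ justified by $u_i\mu_i\in L^1(L^1_{\mathrm{loc}})$, the hypothesis $\nabla\mu_i\in L^1(L^1)$, and the uniform bound on $\nabla\zeta_\varepsilon$, yields
\[
\int_{\rd}(\mu_i(t)-\mu_i(s))\zeta\,dx \;=\; \int_s^t\!\int_{\rd}\nabla\zeta\cdot(u_i\mu_i - \kappa_i\nabla\mu_i)\,dx\,d\tau.
\]

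Combining this identity with the sandwich and the gradient bound $|\nabla\zeta_t|,|\nabla\zeta_s|\le L_c$ recalled in Section~\ref{s:opt}, I conclude that
\[
|\mathcal{D}_c(\mu_1(t),\mu_2(t))-\mathcal{D}_c(\mu_1(s),\mu_2(s))| \;\lesssim\; L_c\!\int_s^t\!\bigl(\|u_1\mu_1\|_{L^1}+\|u_2\mu_2\|_{L^1}+\kappa_1\|\nabla\mu_1\|_{L^1}+\kappa_2\|\nabla\mu_2\|_{L^1}\bigr)\,d\tau,
\]
whose right-hand side is $L^1$ in $\tau$ by hypothesis; this gives the absolute continuity. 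To obtain~\eqref{eq:dtD} at a Lebesgue point $t$ of all the quantities involved, I divide the sandwich by $t-s$ and let $s\to t$: keeping $\zeta_t$ fixed, the upper bound immediately converges to the right-hand side of~\eqref{eq:dtD} and supplies the correct limsup.

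The main obstacle is matching the liminf coming from the lower bound, because the competitor $\zeta_s$ varies with $s$. After normalizing $\zeta_s(0)=0$, the family is uniformly Lipschitz, so Arzel\`a--Ascoli extracts a subsequence converging locally uniformly to some $\zeta_\star$; a standard stability argument for the dual optimal-transport problem identifies $\zeta_\star$ as a Kantorovich potential for $(\mu_1(t),\mu_2(t))$, and the weak-$*$ convergence $\nabla\zeta_s\to\nabla\zeta_\star$ together with time-continuity of the relevant integrals at $\tau=t$ shows that the liminf matches the limsup. Since the derivative is independent of the particular admissible potential, both expressions coincide with the one displayed in~\eqref{eq:dtD}. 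A minor issue that has to be verified is the non-compactness of $\rd$: for the cost functions of interest in the paper, \eqref{eq:ot_dist} and \eqref{eq:ot_wass_b}, the potentials have at most logarithmic growth (respectively are bounded), and the propagation of first moments from Remark~\ref{R1} makes all the integrals at infinity absolutely convergent.
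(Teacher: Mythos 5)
Your proposal follows the paper's strategy almost exactly: the dual formulation, the two-sided comparison obtained by using the optimal potential at one time as an admissible competitor at the other, the weak formulation of the PDE tested against (approximations of) the Lipschitz potentials to rewrite the increments, and the resulting two-sided bound on $|\mathcal{D}_c(\mu_1(t),\mu_2(t))-\mathcal{D}_c(\mu_1(s),\mu_2(s))|$ giving absolute continuity. Up to that point everything is fine, and your remark about the growth of the potentials at infinity being controlled by the first-moment bound is a point the paper passes over quickly.

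The last step, however, contains a gap that is also self-inflicted. Your ``main obstacle'' --- the competitor $\zeta_s$ varying with $s$ in the lower half of the sandwich --- disappears entirely if you use the \emph{fixed} potential $\zeta_t$ on both sides of $t$. Indeed, the inequality
\begin{equation*}
\mathcal{D}_c(\mu_1(t),\mu_2(t))-\mathcal{D}_c(\mu_1(s),\mu_2(s)) \le \intrd (\eta(t)-\eta(s))\,\zeta_t\,dx
\end{equation*}
holds for \emph{every} $s$, on either side of $t$. Dividing by $t-s$ with $s<t$ bounds the left difference quotient from above by $F(\zeta_t):=\intrd\nabla\zeta_t\cdot(u_1\mu_1-u_2\mu_2-\kappa_1\nabla\mu_1+\kappa_2\nabla\mu_2)\,dx$; dividing by $t-s$ with $s>t$ reverses the inequality and bounds the right difference quotient from below by the same quantity. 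At a point where the (absolutely continuous) map is differentiable and which is a Lebesgue point of the time integrals, the two one-sided derivatives coincide and are squeezed to $F(\zeta_t)$. This is exactly what the paper does by letting $h\to 0^{\pm}$ in its single inequality \eqref{eq:der_D_2}, and it never requires any information about $\zeta_s$ for $s\neq t$.

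By contrast, the route you chose is not merely harder but circular as written. After extracting $\zeta_s\to\zeta_\star$ by Arzel\`a--Ascoli and identifying $\zeta_\star$ as \emph{some} Kantorovich potential at time $t$, you must still show that $F(\zeta_\star)=F(\zeta_t)$ to close the sandwich, and you justify this by asserting that ``the derivative is independent of the particular admissible potential.'' But that independence is precisely a consequence of the formula you are trying to prove: two Kantorovich potentials for a strictly concave cost are only guaranteed to have equal gradients $\pi_{\opt}$-a.e., i.e.\ $(\mu_1-\mu_2)^{\pm}$-a.e., whereas $F$ integrates the gradient against $u_1\mu_1-u_2\mu_2-\kappa_1\nabla\mu_1+\kappa_2\nabla\mu_2$, which is not absolutely continuous with respect to those marginals. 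There is also a secondary technical point you wave at but do not resolve: passing to the limit in $\frac{1}{t-s}\int_s^t\intrd\nabla\zeta_s\cdot(\cdots)\,dx\,d\tau$ requires combining the weak-$*$ convergence of $\nabla\zeta_s$ with the Lebesgue-point property in time, which does not follow from either ingredient alone. Both issues evaporate once you adopt the fixed-potential, two-sided-increment argument.
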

\begin{proof}
	By the definition of distributional solution, by a standard approximation argument and integrating by parts, for any $h\in\mathbb{R}$ such that $t-h\in(0,T)$ we have that 
	\begin{align}
			\intrd \zeta (\mu_i(t,x)-\mu_i(t-h,x))dx & = \intrd \nabla\zeta\cdot  \int_{t-h}^t u_i(s,x)\mu_i(s,x)ds dx  + \kappa_i \intrd \zeta \int_{t-h}^t \Delta \mu_i(s,x)ds dx \nonumber \\
			& = \intrd \nabla\zeta\cdot\int_{t-h}^t u_i(s,x)\mu_i(s,x)ds dx \nonumber \\
			& \quad - \kappa_i \intrd \nabla\zeta\cdot \int_{t-h}^t \nabla \mu_i(s,x)ds dx
		\label{eq:der_D_1}
	\end{align}
	for all $\zeta\in C_c^\infty(\rd)$, almost every $t\in(0,T)$ and both $i\in\{1,2\}$. Since now $\mu_i$ and $u_i\mu_i$ are in $L^1(L^1)$ we are allowed to consider \eqref{eq:der_D_1} for all $\zeta\in W^{1,1}(\rd)$.
	
	First we will show that the mapping $t\mapsto \mathcal{D}_c(\mu_1(t,\cdot),\mu_2(t,\cdot))$ is absolutely continuous and therefore differentiable almost everywhere in $(0,T)$. By the optimality of the Kantorovich potential~$\zeta_t$ at time $t$ it holds for almost every $t\in(0,T)$ that
	\begin{align}
 		\MoveEqLeft{\mathcal{D}_c(\mu_1(t,\cdot),\mu_2(t,\cdot)) -\mathcal{D}_c(\mu_1(t-h,\cdot),\mu_2(t-h,\cdot))} \nonumber\\
		 &\leq \intrd \zeta_t (\mu_1(t,x)-\mu_1(t-h,x))dx - \intrd \zeta_t (\mu_2(t,x)-\mu_2(t-h,x))dx \nonumber \\
			& = \intrd \nabla\zeta_t\cdot  \int_{t-h}^t u_1(s,x)\mu_1(s,x)ds dx - \intrd \nabla\zeta_t\cdot  \int_{t-h}^t u_2(s,x)\mu_2(s,x)ds dx \nonumber \\
			& \quad - \kappa_1 \intrd \nabla\zeta_t\cdot \int_{t-h}^t \nabla \mu_1(s,x)ds dx + \kappa_2 \intrd \nabla\zeta_t\cdot \int_{t-h}^t \nabla \mu_2(s,x)ds dx .
		\label{eq:der_D_2}
	\end{align}
    Analogously, again by the optimality of the Kantorovich potential $\zeta_{t-h}$ at time $t-h$, it holds for almost every $t\in(0,T)$ that
	\begin{align}
			\MoveEqLeft{\mathcal{D}_c(\mu_1(t,\cdot),\mu_2(t,\cdot)) -\mathcal{D}_c(\mu_1(t-h,\cdot),\mu_2(t-h,\cdot)) } \nonumber \\
			&\leq \intrd \zeta_{t-h} (\mu_1(t,x)-\mu_1(t-h,x))dx - \intrd \zeta_{t-h} (\mu_2(t,x)-\mu_2(t-h,x))dx \nonumber \\
			& = \intrd \nabla\zeta_{t-h}\cdot  \int_{t-h}^t u_1(s,x)\mu_1(s,x)ds dx - \intrd \nabla\zeta_{t-h}\cdot  \int_{t-h}^t u_2(s,x)\mu_2(s,x)ds dx \nonumber \\
			& \quad - \kappa_1 \intrd \nabla\zeta_{t-h}\cdot \int_{t-h}^t \nabla \mu_1(s,x)ds dx + \kappa_2 \intrd \nabla\zeta_{t-h}\cdot \int_{t-h}^t \nabla \mu_2(s,x)ds dx .
		\label{eq:der_D_3}
	\end{align}
	Therefore, using that $\zeta_t$ is Lipschitz with Lipschitz constant uniformly bounded by $L_c$, we can combine \eqref{eq:der_D_2} and \eqref{eq:der_D_3} to obtain
	\begin{equation*}
	\begin{split}
	\MoveEqLeft{|\mathcal{D}_c(\mu_1(t,\cdot),\mu_2(t,\cdot))  -\mathcal{D}_c(\mu_1(t-h,\cdot),\mu_2(t-h,\cdot))| }\\
	&\leq L_c  \int_{t-h}^t \intrd |u_1(s,x)\mu_1(s,x)|dxds  + L_c  \int_{t-h}^t \intrd |u_2(s,x)\mu_2(s,x)|dxds \\
	& \quad +  L_c\kappa_1\int_{t-h}^t \intrd |\nabla\mu_1(s,x)|dxds +  L_c\kappa_2\int_{t-h}^t \intrd |\nabla\mu_2(s,x)|dxds
	\end{split}
	\end{equation*}
	for almost every $t\in(0,T)$. Since $u_i\mu_i\in L^1(L^1)$ and $\nabla\mu_i\in L^1(L^1)$ for $i\in\{1,2\}$, we conclude that indeed $t\mapsto\mathcal{D}_c(\theta(t,\cdot))$ is an absolutely continuous mapping.
	
	It remains to prove that the  derivative of the mapping takes the expression~\eqref{eq:dtD}. In order to do that, we consider again \eqref{eq:der_D_2} and \eqref{eq:der_D_3}, divide by $h$ and let $h\rightarrow 0$. By Lebesgue's differentiation theorem we get
	\begin{equation*}
	\begin{split}
		\MoveEqLeft{\lim_{h\rightarrow 0^+} \frac{\mathcal{D}_c(\mu_1(t,\cdot),\mu_2(t,\cdot))-\mathcal{D}_c(\mu_1(t-h,\cdot),\mu_2(t-h,\cdot))}{h} }\\
		& \leq \intrd \nabla\zeta_t\cdot (u_1(t,x)\mu_1(t,x)-u_2(t,x)\mu_2(t,x))dx  - \intrd \nabla\zeta_t \cdot(\kappa_1\nabla\mu_1(t,x)-\kappa_2\nabla\mu_2(t,x)) dx
	\end{split}
	\end{equation*}
	and
	\begin{equation*}
	\begin{split}
		\MoveEqLeft{\lim_{h\rightarrow 0^-} \frac{\mathcal{D}_c(\mu_1(t,\cdot),\mu_2(t,\cdot))-\mathcal{D}_c(\mu_1(t-h,\cdot),\mu_2(t-h,\cdot))}{h}}\\
		& \geq \intrd \nabla\zeta_t\cdot (u_1(t,x)\mu_1(t,x)-u_2(t,x)\mu_2(t,x))dx - \intrd \nabla\zeta_t \cdot(\kappa_1\nabla\mu_1(t,x)-\kappa_2\nabla\mu_2(t,x)) dx
	\end{split}
	\end{equation*}
	which implies \eqref{eq:dtD} for almost every $t\in(0,T)$.
\end{proof}

\section{Stability in the DiPerna--Lions setting: Proof of Theorem~\ref{thm:1}}
\label{s:main}

Next in order, we will introduce a first tool that is essential in the proof of Theorem \ref{thm:1}. 
It is based on the Hardy--Littlewood maximal function operator $M$, which is a central tool from the Calderón--Zygmund theory defined for any measurable functions $f:\rd\rightarrow\mathbb{R}$ by
\begin{equation}
Mf(x) = \sup_{R>0} \frac{1}{|B_{R}(x)|}\int_{B_{R}(x)} |f(z)|dz.
\label{eq:maximal}
\end{equation}
The operator is continuous from $L^p$ to $L^p$ if $p\in (1,\infty]$, hence $\|Mf\|_{L^p} \lesssim \|f\|_{L^p}$, see \cite{Stein70} for details. Moreover with the maximal function operator one can find bounds for the difference quotients  by using Morrey-type inequalities, namely, for almost any $x,y\in\rd$ it holds
\begin{equation}
	\frac{|f(x)-f(y)|}{|x-y|} \lesssim M(\nabla f)(x) + M(\nabla f)(y).
	\label{eq:morrey}
\end{equation}
A proof of the continuity of the Morrey-type estimate can be found in \cite{EvansGariepy92}.
The estimate in Lemma~\ref{lem:maximal} below resembles the one in \cite[Lemma 3]{Seis17}, but here we adapt the setting and the notation to make it more useful for our purpose.
\begin{lemma}\label{lem:maximal}
Let $p\in(1,\infty]$ and $q\in[1,\infty)$ such that $1/p+1/q=1$. Let $\eta_1,\eta_2\in L^1\cap L^q$ be densities of equal mass, i.e. $\intrd \eta_1 = \intrd \eta_2$, and let $\sigma\in\Pi(\eta_1,\eta_2)$ be a coupling with marginals $\eta_1$ and $\eta_2$. Then for any integrable function $u$ with $\nabla u\in L^p$, it holds
\begin{equation}
\iint_{\rd\times\rd} \frac{|u(x)-u(y)|}{|x-y|}d\sigma(x,y) \lesssim (\|\eta_1\|_{L^q}+\|\eta_2\|_{L^q})\|\nabla u\|_{L^p}.
\end{equation}
\end{lemma}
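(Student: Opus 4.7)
The plan is to apply the standard Crippa--De Lellis argument built around the Hardy--Littlewood maximal function. The starting point is the Morrey-type pointwise inequality~\eqref{eq:morrey}, which gives, for $\sigma$-almost every $(x,y)\in\rd\times\rd$,
\begin{equation*}
\frac{|u(x)-u(y)|}{|x-y|} \lesssim M(\nabla u)(x) + M(\nabla u)(y).
\end{equation*}
Integrating both sides against the coupling $\sigma$ and splitting the resulting two terms, I would then use the defining marginal properties $\sigma[A\times\rd]=\eta_1[A]$ and $\sigma[\rd\times A]=\eta_2[A]$ to rewrite the two double integrals as single integrals:
\begin{equation*}
\iint_{\rd\times\rd} M(\nabla u)(x)\, d\sigma(x,y) = \intrd M(\nabla u)(x)\, \eta_1(x)\, dx,
\end{equation*}
and analogously with $\eta_2$ for the $y$-term.

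The next step is to bound each of these single integrals via Hölder's inequality with the conjugate pair $(p,q)$, obtaining
\begin{equation*}
\intrd M(\nabla u)(x)\, \eta_i(x)\, dx \leq \|M(\nabla u)\|_{L^p}\, \|\eta_i\|_{L^q},\qquad i\in\{1,2\}.
\end{equation*}
Finally, the continuity of the maximal operator on $L^p$ for $p\in(1,\infty]$ yields $\|M(\nabla u)\|_{L^p}\lesssim \|\nabla u\|_{L^p}$, and summing the two contributions produces the claimed estimate. The hypothesis $p>1$ enters exclusively through this last step; the endpoint case $p=\infty$, $q=1$ is trivial since $M$ is a contraction on $L^\infty$ and $\eta_i\in L^1$ by assumption.

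There is no real obstacle: the argument is essentially an integrated version of the pointwise Morrey bound, and the only subtlety is to ensure the Morrey inequality actually holds $\sigma$-a.e.\ rather than just $\mathcal{L}^d\otimes\mathcal{L}^d$-a.e. This is where the absolute continuity of the marginals $\eta_1,\eta_2$ with respect to $\mathcal{L}^d$ is used: since $\eta_1,\eta_2\in L^1$, the set where \eqref{eq:morrey} fails on either coordinate has zero Lebesgue measure, hence zero $\eta_i$-measure, and therefore zero $\sigma$-measure on the corresponding slice, which is enough to push the pointwise bound under the $\sigma$-integral.
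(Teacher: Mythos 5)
Your proof is correct and follows essentially the same route as the paper's: the pointwise Morrey-type bound \eqref{eq:morrey}, the marginal conditions of $\sigma$ to reduce to single integrals, and then H\"older's inequality together with the $L^p$-continuity of the maximal operator for $p>1$. The additional remark on why \eqref{eq:morrey} holds $\sigma$-a.e.\ (via absolute continuity of the marginals) is a fine point the paper leaves implicit, but it does not change the argument.
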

\begin{proof}
First, the statement is trivial if $p=\infty$ since $u$ is Lipschitz and we arrive straightforwardly to the result of the lemma. 
Now, let $p\in(1,\infty)$. By means of the Morrey-type inequality \eqref{eq:morrey} and using the marginal conditions for the measure $\sigma(x,y)$ we obtain
\begin{equation*}
\begin{split}
\iint_{\rd\times\rd} \frac{|u(x)-u(y)|}{|x-y|}d\sigma(x,y) & \lesssim \iint_{\rd\times\rd} (M(\nabla u)(x) + M(\nabla u)(y))d\sigma(x,y) \\
& = \intrd M(\nabla u)(x)\eta_1(x)dx + \intrd M(\nabla u)(y)\eta_2(y)dy.
\end{split}
\end{equation*}
Then by Hölder inequality and by the continuity of the maximal operator from $L^p$ to $L^p$ we deduce the statement of the Lemma.
\end{proof}

Observe that this result can only be applied when $\nabla u\in L^p$ with $p>1$. The limit case where $\grad u$ is a singular integral of an $L^1$ function, that will be of interest in the next section, has to be dealt more carefully and requires some more elaborate tools from the Calderón--Zygmund theory.

\begin{proof}[Proof of Theorem \ref{thm:1}] We consider the dual setting of the Kantorovich--Rubinstein distance between the solutions $\theta_1$ and $\theta_2$. By Lemma \ref{lemma:dtD} we can write and set
\begin{align}
\MoveEqLeft\frac{d}{dt}\mathcal{D}_\delta(\theta_1,\theta_2)  = \intrd \nabla\zeta_t\cdot (u_1\theta_1-u_2\theta_2)dx - \intrd \nabla\zeta_t \cdot(\kappa_1\nabla\theta_1-\kappa_2\nabla\theta_2) dx \nonumber\\
& = \intrd \nabla\zeta_t\cdot (u_1\theta_1-u_2\theta_2)dx -  \kappa_1\intrd \grad \zeta_t \cdot (\grad\theta_1-\grad\theta_2) dx  + (\kappa_2-\kappa_1)\intrd \nabla\zeta_t \cdot \nabla\theta_2 dx\nonumber \\
& =: \Theta_1(t) + \Theta_2(t) + \Theta_3(t).\label{eq:sum_Thetas}
\end{align}
We prove that the individual terms are controlled as follows
\begin{align}
	\Theta_1(t) &\lesssim \|\grad u_1\|_{L^p} + \frac{\|u_1-u_2\|_{L^p}}{\delta} ;
	\label{eq:t1}\\
	\Theta_2(t) &\leq 0 ; \label{eq:t2} \\
	\Theta_3(t) &\leq \frac{|\kappa_2-\kappa_1|}{\delta}\|\nabla\theta_2\|_{L^1}.		\label{eq:t3}
\end{align}
Before turning to the  proofs of these bounds, we can straightforwardly conclude the proof of Theorem~\ref{thm:1}.
Indeed, inserting the bounds \eqref{eq:t1}, \eqref{eq:t2} and \eqref{eq:t3}  into \eqref{eq:sum_Thetas} imply after integrating over $(0,t)$ for any $0\leq t\leq T$ the stability estimate
	\begin{equation*}
		\mathcal{D}_\delta(\theta_1(t,\cdot),\theta_2(t,\cdot)) - \mathcal{D}_\delta(\theta_1(0,\cdot),\theta_2(0,\cdot)) \lesssim \|\grad u_1\|_{L^1(L^p)} + \frac{\|u_1-u_2\|_{L^1(L^p)}}{\delta} + \frac{|\kappa_1-\kappa_2|}{\delta}\|\nabla\theta_2\|_{L^1},
	\end{equation*}
which is what we aimed to prove.

\emph{Proof of Estimate~\eqref{eq:t1}.}
For the first term $\Theta_1(t)$ we will use the dual representation of the optimal transportation distance. Using the properties of the Kantorovich potential $\zeta_t$ and the marginal conditions of the optimal transport plan, it holds
\begin{equation*}
\begin{split}
\Theta_1(t) & =  \intrd \nabla\zeta_t\cdot (u_1\theta_1-u_2\theta_2)\, dx\\
&  = \iint_{\rd\times\rd} (\nabla\zeta_t(x)\cdot u_1(x)-\nabla\zeta_t(y)\cdot u_2(y))d\pi_t(x,y) \\
& = \iint_{\rd\times\rd} \frac{1}{|x-y|+\delta}\frac{x-y}{|x-y|}\cdot (u_1(x)-u_2(y))d\pi_t(x,y),
\end{split}
\end{equation*}
where $\pi_t$ is the optimal transport plan in $\Pi((\theta_1(t)-\theta_2(t))^+,(\theta_1(t)-\theta_2(t))^-)$. We now separate the gradient term from the error term,
\begin{equation*}
\begin{split}
|\Theta_1(t)| & \leq \iint_{\rd\times\rd} \frac{|u_1(x)-u_1(y)|}{|x-y|} d\pi_t(x,y) + \frac{1}{\delta}\iint_{\rd\times\rd} |u_1(y)-u_2(y)| d\pi_t(x,y) .
\end{split}
\end{equation*}
For the difference quotients in the first term, we apply Lemma \ref{lem:maximal}. Regarding the second term, we can use the marginal conditions and the Hölder inequality,
\begin{align*}
\frac{1}{\delta}\iint_{\rd\times\rd} |u_1(y)-u_2(y)| d\pi_t(x,y)& = \frac{1}{\delta}\intrd |u_1-u_2|(\theta_1- \theta_2)^-dy \leq \frac{\|u_1-u_2\|_{L^p}}{\delta}\|\theta_2\|_{L^q}.
\end{align*}
All in all, we have established \eqref{eq:t1}.

\emph{Proof of Estimate~\eqref{eq:t2}.}
The control of $\Theta_2(t)$, the second term in \eqref{eq:sum_Thetas}, is based on a discretization approach and hence we introduce \emph{finite difference quotients}. 
Assume $v:\rd\rightarrow \mathbb{R}$ is a locally summable function, then the $i^{th}$ difference quotient with $1\leq i\leq d$ of size $h>0$ at $x\in \R^d$ is given by
\begin{equation}
  D_i^h v(x) = \frac{v(x+he_i)-v(x)}{h}.
  \label{eq:diff_quotient}
\end{equation}
We also make use of these quotients to approximate the Laplacian by the standard three point stencil
\begin{equation*}
	  \Delta^h v(t,x) :=  \sum_{i=1}^d -D_i^{-h}D_i^v \theta(t,x) = \frac1{h^2} \sum_{i=1}^d \left(v(x + he_i) - 2v(x) + v(x-he_i)\right).\end{equation*}
We follow a technique inspired from \cite{FournierPerthame19} consisting of a convenient rearrangement of the terms involved thanks to a discretization of the spatial derivatives. Let us take $h>0$ sufficiently small and write the finite differences approximation of the Laplacian using the difference quotients to arrive at a discretized analog of $\Theta_2$ given for all $t\in(0,T)$ by
\begin{align*}
	\frac1{\kappa_1}\Theta_2^h(t) &=  \intrd \zeta_t (x)\; \Delta^h \theta_1(t,x)dx -  \intrd \zeta_t(x) \; \Delta^h \theta_2(t,x) dx \\
	&= \frac{1}{h^2}\intrd \zeta_t (x)\sum_{i=1}^d  \bigl(\theta_1(t,x+he_i)-2\theta_1(t,x)+\theta_1(t,x-he_i)\bigr)dx \\
		&\quad - \frac{1}{h^2}\intrd \zeta_t (x)\sum_{i=1}^d  \bigl(\theta_2(t,x+he_i)-2\theta_2(t,x)+\theta_2(t,x-he_i)\bigr)dx.
\end{align*}
Since $\theta_1,\theta_2 \in W^{1,1}$ and $\zeta \in  W^{1,\infty}$, we have that
\begin{equation*}
\Theta_2(t) = \lim_{h\rightarrow 0}\Theta_2^h(t) ,
\end{equation*}
and thus, it is enough to estimate $\Theta^h_2$ instead of $\Theta_2$. 

In order to estimate $\Theta_2^h$, we apply a convenient changes of variables $z_i=x+he_i$ and $y_i=x-he_i$ for all $1\leq i\leq d$ to arrive at
\begin{align*}
	\frac1{\kappa_1}\Theta_2^h(t)
	&= \frac{1}{h^2}\intrd \left(\theta_1(t,x) -\theta_2(t,x) \right)\sum_{i=1}^d  \bigl(\zeta_t(x+he_i)-2\zeta_t(x)+\zeta_t(x-he_i)\bigr)dx ,
\end{align*}
and exploring to the optimality of $\zeta_t $ 
therefore now $\zeta_{t,z_i}$ and $\zeta_{t,y_i}$ in the dual formulation of the Kantorovich--Rubinstein distance $\mathcal{D}_\delta (\theta_1,\theta_2)$, we obtain
\begin{equation*}
\frac{h^2}{\kappa_1}\Theta_2^h(t) \leq -2d\mathcal{D}_\delta (\theta_1, \theta_2) + d\mathcal{D}_\delta (\theta_1, \theta_2) + d\mathcal{D}_\delta (\theta_1, \theta_2) = 0,
\label{eq:t2h}
\end{equation*}
which proves \eqref{eq:t2}. 

\emph{Proof of Estimate~\eqref{eq:t3}.}
Finally, for $\Theta_3(t)$ we can use again that $\zeta_t$ is a Lipschitz function with Lipschitz constant bounded by $\|\nabla\zeta_t\|_{L^\infty}\leq 1/\delta$, so we arrive at~\eqref{eq:t3}.
\end{proof}

\section{Uniqueness of distributional solutions with rough advection fields: Proof of Theorem~\ref{thm:2}}
\label{ss:uniqueness}

In this section, we are going to deal with the existence and uniqueness problem for the advection-diffusion equation \eqref{eq:adv-diff} stated in Theorem \ref{thm:2}. Before turning to its proof, we briefly verify that the velocity fields considered here belong indeed to a  weak $L^p$ space, globally in space. 
\begin{lemma}\label{L100}
Let $u$ be given by~\eqref{eq:Hu2} with $\omega \in L^1(\R^d)$ and $k$ satisfying~\ref{ass:k1}--\ref{ass:k4}, then it holds that
\[
u\in L^{p,\infty}\bigr(\R^d\bigl)\qquad\text{ for }   p = \frac{d}{d-1}.
\]
\end{lemma}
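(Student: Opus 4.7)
The plan is to first show that each kernel $k_i$ itself lies in the weak Lebesgue space $L^{p,\infty}(\R^d)$ with $p=d/(d-1)$, and then deduce the conclusion for $u_i = k_i\ast\omega_i$ via the weak version of Young's convolution inequality, using that $\omega_i\in L^1(\R^d)$.

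For the first step, I would use assumption~\ref{ass:k1}--\ref{ass:k4}, in particular condition (k$_3$) with multi-index $\alpha=0$, which yields the pointwise bound $|k(x)|\lesssim |x|^{-(d-1)}$ for all $x\ne 0$. Consequently, for every $\lambda>0$,
\begin{equation*}
\mathcal{L}^d\bigl(\{x\in\R^d : |k(x)|>\lambda\}\bigr)\le \mathcal{L}^d\bigl(\{x\in\R^d : C|x|^{-(d-1)}>\lambda\}\bigr)=\mathcal{L}^d\bigl(B_{(C/\lambda)^{1/(d-1)}}(0)\bigr)\lesssim \lambda^{-\frac{d}{d-1}},
\end{equation*}
which, in view of the definition~\eqref{eq:weakLp}, is precisely the statement $k\in L^{p,\infty}(\R^d)$ with $p=d/(d-1)$ and $\|k\|_{L^{p,\infty}}\lesssim 1$.

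For the second step, I would invoke the weak Young convolution inequality: whenever $f\in L^{p,\infty}(\R^d)$ and $g\in L^1(\R^d)$ with $p\in(1,\infty)$, one has
\begin{equation*}
\|f\ast g\|_{L^{p,\infty}}\lesssim \|f\|_{L^{p,\infty}}\|g\|_{L^1}.
\end{equation*}
Applying this to $f=k_i$ and $g=\omega_i\in L^1(\R^d)$, and using the bound from the first step, we conclude that $u_i = k_i\ast\omega_i\in L^{p,\infty}(\R^d)$ with $\|u_i\|_{L^{p,\infty}}\lesssim \|\omega_i\|_{L^1}$ for each $i\in\{1,\dots,d\}$. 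Summing over components then yields $u\in L^{p,\infty}(\R^d)$, as claimed.

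The main obstacle here is essentially bookkeeping: the lemma is really a direct consequence of the pointwise decay encoded in (k$_3$) combined with a standard convolution inequality in Lorentz spaces. Once the weak-type Young inequality is cited (it follows, e.g., from O'Neil's convolution theorem or can be proved by real interpolation of the strong Young inequality), there is no genuine analytic difficulty.
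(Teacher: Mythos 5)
Your proof is correct, but it takes a different route from the paper's. You factor the argument through two standard facts: the pointwise bound $|k(x)|\lesssim|x|^{1-d}$ from (k$_3$) with $\alpha=0$ places $k$ in $L^{d/(d-1),\infty}(\R^d)$, and then the weak-type Young inequality $\|f\ast g\|_{L^{p,\infty}}\lesssim\|f\|_{L^{p,\infty}}\|g\|_{L^1}$ (valid for $1<p<\infty$, e.g.\ by real interpolation of the trivial $L^1\to L^1$ and $L^\infty\to L^\infty$ bounds for convolution with a fixed $L^1$ function, or via O'Neil) finishes the job. The paper instead gives a self-contained elementary argument: starting from the same pointwise bound $|u(x)|\lesssim\int|\omega(y)|\,|x-y|^{1-d}\,dy$, it splits the kernel at a radius $R$ into a far-field part bounded by $R^{1-d}$ and a near-field part $\bigl(\chi_{B_R}|\cdot|^{1-d}\bigr)\ast|\omega|$ whose $L^1$ norm is $\lesssim R$ by Young's inequality, then chooses $R$ so that $\lambda\sim R^{1-d}$ and applies Chebyshev to the near-field part. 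This is in effect an inlined proof of exactly the special case of the weak Young inequality you cite, so the two arguments are mathematically equivalent in content; yours is shorter but leans on a cited theorem, while the paper's keeps the lemma elementary and explicit about how the exponent $p=d/(d-1)$ emerges from the optimization $\lambda^p\cdot R/\lambda\sim\lambda^{p-1-1/(d-1)}$. One minor point: since $\|\cdot\|_{L^{p,\infty}}$ is only a quasi-norm, "summing over components" uses the quasi-triangle inequality, which is harmless here because you only need membership, not a genuine norm estimate.
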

Our assumption  in~\eqref{eq:Hu1} is a little bit stronger than what is implied by \eqref{103}, but it falls into the class of velocity fields that are, for instance, induced by $L^{\infty}(L^1)$ vorticity solutions to the Euler equations as considered in \cite{CrippaNobiliSeisSpirito17}.

The result is a classical result from harmonic analysis. We provide a short elementary proof for the convenience of the reader.
\begin{proof}By rescaling $\omega$, we may without loss of generality assume that $\|\omega\|_{L^1}=1$. In view of the assumptions on the velocity field, we have the pointwise estimate
\[
|u(x)| \lesssim \int_{\R^d}\frac{|\omega(y)|}{|x-y|^{d-1}}dy.
\]
Given some radius $R>0$ that we will fix later, we now decompose the integral into a bounded and an integrable part,
\begin{equation}\label{105}
|u(x)| \lesssim  \int_{B_R(x)}\frac{|\omega(y)|}{|x-y|^{d-1}}dy +  \int_{B_R(x)^c}\frac{|\omega(y)|}{|x-y|^{d-1}}dy \le \left(\chi_{B_R(0)} \frac1{|\cdot|^{d-1}}\right)\ast |\omega| (x) + \frac{1}{R^{d-1}}.
	\end{equation}
An integral bound on the first term is obtained via an elementary computation that is based on Young's inequality,
\begin{equation}\label{106}
\left\lVert\left(\chi_{B_R(0)} \frac1{|\cdot|^{d-1}}\right)\ast \omega\right\rVert_{L^1} \le \biggl\lVert\chi_{B_R(0)} \frac1{|\cdot|^{d-1}}\biggr\rVert_{L^1}\|\omega \|_{L^1}\lesssim R.
\end{equation}
Here, we do not keep track of the dependence of constants on $\omega$.

In order to estimate the weak $L^p$ norm, we let $\lambda $ be arbitrarily given and suppose that $\lambda <|u(x)|$. Then we have by \eqref{105} for some $R$ such that $\lambda \sim R^{{1-d}}$ , 
\[
c \lambda \le  \left(\chi_{B_R(0)} \frac1{|\cdot|^{d-1}}\right)\ast |\omega|(x),
\]
for some  small $c$, and thus, 
\[
\L^{d+1}\bigl(\{|u|>\lambda\}\bigr)\le \L^{d+1}\biggl(\biggl\{\chi_{B_R(0)}\frac1{|\cdot|^{d-1}} \ast | \omega |>c\lambda\biggr\}\biggr) \lesssim\frac1{\lambda}\left\lVert\left(\chi_{B_R(0)} \frac1{|\cdot|^{d-1}}\right)\ast |\omega|\right\rVert_{L^1} . 
\]
As a consequence of \eqref{106} and the above choice of $R$, we deduce that
\[
\lambda^p \L^{d+1}\bigl(\{|u|>\lambda\}\bigr)\lesssim \lambda^{p-1-\frac1{d-1}}.
\]
Choosing $p$ as in the statement of the lemma, we see that the right-hand side is in fact independent of $\lambda$ and so is the supremum in $\lambda$, which yields the desired bound.
\end{proof}
Towards a proof of the uniqueness result from Theorem \ref{thm:2}, we will have to establish a suitably adapted version of the stability estimate in Theorem \ref{thm:1}. 
\begin{proposition}\label{prop:stab_est_2}
Under the assumptions of Theorem \ref{thm:exist_uniq}, let $\theta\in L^\infty((0,T);(L^1\cap L^\infty)(\rd))$ be a solution of \eqref{eq:adv-diff} with $\intrd \theta^0 = 0$ and $\theta\not\equiv 0$. Then for every $\varepsilon>0$ there exists a constant $C_\varepsilon>0$ such that for every $\delta >0$ it holds
\begin{equation*}
\sup_{0\leq t\leq T}\mathcal{D}_\delta(\theta(t,\cdot)) \lesssim \mathcal{D}_\delta(\theta^0) + \varepsilon\|\theta\|_{L^1}\left[1+\log\left( \frac{1}{\varepsilon\delta}\left(\frac{\|\theta\|_{L^1}}{\|\theta\|_{L^\infty}}\right)^{1-\frac{1}{p}}\|u\|_{L^{p,\infty}} \right)\right] + C_\varepsilon\|\theta\|_{L^\infty(L^2)}.
\end{equation*}
\end{proposition}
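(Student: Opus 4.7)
The plan is to mirror the proof of Theorem~\ref{thm:1} with only a single signed solution $\theta$ satisfying $\intrd\theta^0=0$ in place of the pair $(\theta_1,\theta_2)$, and to replace Lemma~\ref{lem:maximal} by a maximal-function bound adapted to gradients that are singular integrals of $L^1$ functions, as developed in~\cite{BouchutCrippa13,CrippaNobiliSeisSpirito17}. A direct adaptation of the argument of Lemma~\ref{lemma:dtD} with $\mu_1=\theta^+$, $\mu_2=\theta^-$ shows that $t\mapsto\mathcal{D}_\delta(\theta(t))$ is absolutely continuous with
\[
\frac{d}{dt}\mathcal{D}_\delta\bigl(\theta(t)\bigr)=\intrd\nabla\zeta_t\cdot u\,\theta\,dx-\kappa\intrd\nabla\zeta_t\cdot\nabla\theta\,dx,
\]
where $\zeta_t$ is the Kantorovich potential associated to $(\theta^+(t),\theta^-(t))$; the optimality of $\zeta_t$ at time~$t$ together with the distributional equation for $\theta=\theta^+-\theta^-$ yields this identity after testing. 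The diffusion term is $\le 0$ by the same discretization argument that produces~\eqref{eq:t2}: approximating $\Delta\theta$ by the stencil $\Delta^h\theta$ and transferring the finite differences onto $\zeta_t$, the translates $\zeta_t(\cdot\pm he_i)$ remain admissible in the dual formulation of $\mathcal{D}_\delta$, so each of $\int\zeta_t(\cdot\pm he_i)\theta\,dx$ is bounded by $\mathcal{D}_\delta(\theta)$, and the stencil combination $1-2+1$ gives $\le 0$ upon letting $h\to 0$.

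For the advection term, I use the optimal plan $\pi_t\in\Pi(\theta^+,\theta^-)$ and the identity $\nabla\zeta_t(x)=\nabla\zeta_t(y)=\frac{1}{|x-y|+\delta}\frac{x-y}{|x-y|}$ on $\operatorname{supp}\pi_t$, valid for the strictly concave logarithmic cost, to rewrite
\[
\biggl|\intrd\nabla\zeta_t\cdot u\,\theta\,dx\biggr|=\biggl|\iint_{\rd\times\rd}\nabla\zeta_t(x)\cdot\bigl(u(x)-u(y)\bigr)\,d\pi_t\biggr|\le\iint_{\rd\times\rd}\frac{|u(x)-u(y)|}{|x-y|+\delta}\,d\pi_t.
\]
Up to this point the argument is a direct transcription of the proof of Theorem~\ref{thm:1}; the entire difficulty is now concentrated in bounding this last integral in the rough setting where $\nabla u=\nabla k\ast\omega$ with $\omega\in L^1$, so that Lemma~\ref{lem:maximal} is no longer available.

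The main obstacle is thus the replacement of Lemma~\ref{lem:maximal}. I would split $\rd\times\rd$ at a threshold radius $R$ to be optimized. On the far set $\{|x-y|>R\}$, the crude bound $(|x-y|+\delta)^{-1}\le R^{-1}$ reduces the contribution to $R^{-1}\iint|u(x)-u(y)|\,d\pi_t\lesssim R^{-1}\intrd|u|\,|\theta|\,dx$, which by Lorentz--H\"older duality (using $u\in L^{p,\infty}$) and the interpolation of $\theta$ between $L^1$ and $L^\infty$ can be absorbed into the $C_\varepsilon\|\theta\|_{L^\infty(L^2)}$ term. On the near set $\{|x-y|\le R\}$, I would invoke a Bouchut--Crippa style estimate for convolutions with singular kernels~\cite{BouchutCrippa13}: for $u=k\ast\omega$ with $k$ satisfying~\ref{ass:k1}--\ref{ass:k4}, a pointwise bound of the form
\[
\frac{|u(x)-u(y)|}{|x-y|+\delta}\lesssim \bigl(M\omega(x)+M\omega(y)\bigr)\,\log(R/\delta)+\text{bounded remainder}
\]
is available, where the logarithm collects the singular part $|\nabla k|\sim|x|^{-d}$ integrated over the annulus $\delta\le|x|\le R$. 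Since $M$ is only of weak type $(1,1)$, a Chebyshev truncation at level $\varepsilon$ of the exceptional set $\{M\omega>\lambda\}$ is used to split the contribution of $M\omega$ against $|\theta|$ into a large-set piece, bounded by $\varepsilon\|\theta\|_{L^1}[1+\log(\cdot)]$, and a small-set piece, absorbed via Cauchy--Schwarz into $C_\varepsilon\|\theta\|_{L^\infty(L^2)}$; the choice of $R$ balances the two contributions and produces the precise logarithmic argument displayed in the statement.

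Once these ingredients are in place, combining them and integrating in $t\in(0,T)$ yields the claimed bound; no Gr\"onwall loop is needed, since the right-hand side depends on $\theta$ only through its global $L^1$ and $L^\infty(L^2)$ norms. The hard step is plainly the near-field estimate, which requires the refined Calder\'on--Zygmund techniques of~\cite{BouchutCrippa13,CrippaNobiliSeisSpirito17} and is the very reason why the present proposition loses a factor of order $\varepsilon[1+\log(1/\delta)]$ compared with the clean Sobolev stability estimate of Theorem~\ref{thm:1}.
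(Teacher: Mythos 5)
Your reduction to
\[
\sup_{t}\mathcal{D}_\delta(\theta(t,\cdot)) \leq \mathcal{D}_\delta(\theta^0)+\int_0^T\iint_{\rd\times\rd}\frac{|u(t,x)-u(t,y)|}{|x-y|+\delta}\,d\pi_t(x,y)\,dt
\]
matches the paper exactly (Lemma~\ref{lemma:dtD}, nonpositivity of the diffusion term via the stencil argument). The gap is in the near-field estimate, which is indeed the whole point of the proposition. The pointwise bound you invoke, $\frac{|u(x)-u(y)|}{|x-y|+\delta}\lesssim (M\omega(x)+M\omega(y))\log(R/\delta)+\text{remainder}$, is not available: for $u=k\ast\omega$ with $\nabla k$ a singular kernel, the difference quotient is controlled by the \emph{maximal truncated singular integral} of $\omega$, not by the Hardy--Littlewood maximal function $M\omega$ — the truncated singular integral $\int_{B_{2r}(x)^c}\nabla k(x-z)\omega(z)\,dz$ that appears in the Taylor-type expansion is precisely what $M\omega$ fails to dominate, and this failure is the reason the $L^1$ case is hard at all. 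The correct substitute (Bouchut--Crippa, as recorded in \eqref{eq:weak_morrey2}--\eqref{eq:weak_morrey1}) has no logarithmic factor; it produces a function $G=G_\varepsilon^1+G_\varepsilon^2$ with $G_\varepsilon^1$ only \emph{small in $L^{1,\infty}$} and $G_\varepsilon^2$ bounded in $L^1(L^2)$, the decomposition coming from the equi-integrability of $\omega$ and the weak-$(1,1)$/strong-$(2,2)$ bounds for the associated maximal operator.

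Your truncation scheme also does not close even if one grants some maximal-function control: a function bounded only in $L^{1,\infty}$ cannot be integrated against $|\theta|\in L^\infty$ (the ``small-set piece'' $\int_{\{M\omega>\lambda\}}M\omega|\theta|$ is not finite by Cauchy--Schwarz for $\omega\in L^1$ only). The paper's resolution is the pointwise minimum $\psi=\min\{|u|/\delta,\,G_\varepsilon^1\}$: one bounds $\|\psi\|_{L^{1,\infty}(\mu)}\leq\varepsilon\|\theta\|_{L^\infty}$ and $\|\psi\|_{L^{p,\infty}(\mu)}\leq\delta^{-1}\|\theta\|_{L^\infty}^{1/p}\|u\|_{L^{p,\infty}}$ for the measure $d\mu=|\theta|\,dx\,dt$, and then applies the interpolation inequality of \cite[Lemma~2.6]{CrippaNobiliSeisSpirito17} between $L^{1,\infty}(\mu)$ and $L^{p,\infty}(\mu)$; this is where the precise factor $\varepsilon\bigl[1+\log\bigl(\tfrac{1}{\varepsilon\delta}(\|\theta\|_{L^1}/\|\theta\|_{L^\infty})^{1-1/p}\|u\|_{L^{p,\infty}}\bigr)\bigr]$ in the statement originates — your $\log(R/\delta)$ with an optimized $R$ would not reproduce it. Finally, the cross terms $\min\{|u(x)|/\delta,G_\varepsilon^1(y)\}$ (which your sketch omits) require the Gangbo--McCann maps $T,S$ and the push-forward identity $\theta^-=T_\#\theta^+$ to transfer the $L^{p,\infty}(\mu)$ bound to $u\circ T$. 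Your far-field splitting is unnecessary once this machinery is in place: the $\min$ with $|u|/\delta$ already encodes the far-field bound.
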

This estimate was derived earlier in the non-diffusive setting \cite{CrippaNobiliSeisSpirito17}. As the argument in the present work is essentially a combination of the one therein and the one that we proposed in order to establish Theorem \ref{thm:1}, we will keep our presentation here short.

Regarding the proof,  the main difference between the DiPerna--Lions setting considered in the previous section and the one we deal with here is the failure of the maximal function estimates. Instead of estimating difference quotients with the help of the Morrey-type estimate in \eqref{eq:morrey}, the strategy here is to construct certain weighted maximal functions which allow for the substitutive estimate
\begin{equation}
\frac{|u(t,x)-u(t,y)|}{|x-y|} \lesssim G(t,x)+G(t,y) \qquad \text{ for all } x,y\not\in N_t,
\label{eq:weak_morrey2}
\end{equation}
where $N_t$ is a negligible set,  $\mathcal{L}^d(N_t)=0$, which exists for almost every time $t$, and $G:(0,T)\times \rd\rightarrow \mathbb{R}$ is a function which can be decomposed for every $\varepsilon>0$ into a sum $G=G_\varepsilon^1+G_\varepsilon^2$, where $G_{\eps}^1$ and $G_{\eps}^2$ are such that
\begin{equation}
\|G_\varepsilon^1\|_{L^1(L^{1,\infty})} \leq \varepsilon, \qquad \|G_\varepsilon^2\|_{L^1(L^2)} \leq C_\varepsilon.
\label{eq:weak_morrey1}
\end{equation}
If $\omega\in L^1(L^1)$, it is proved in \cite{BouchutCrippa13} that such a function exists and the constant $C_\varepsilon$ depends not only on $\varepsilon>0$ but also on the equi-integrability of $\omega$. Therefore, this result would not generalize to situations in which $\omega$ is simply a measure.

We will not give any details about the construction of the function $G$ in \eqref{eq:weak_morrey2} and \eqref{eq:weak_morrey1}. However, for the convenience of the reader, we provide here the full argument for Proposition \ref{prop:stab_est_2}.
\begin{proof}[Proof of Propositon \ref{prop:stab_est_2}]
As we are seeking the stability estimate analogous to the one in Theorem~\ref{thm:1}, we start by computing the time derivative of the optimal transportation distance. Denoting $\zeta_t$ the Kantorovich potential at time $t$, by Lemma \ref{lemma:dtD} we have
\begin{equation*}
\frac{d}{dt}\mathcal{D}_\delta(\theta(t,\cdot)) = \intrd \nabla\zeta_t\cdot u(t,x)\theta(t,x)dx - \kappa\intrd \nabla\zeta_t \cdot\nabla\theta(t,x)dx.
\end{equation*}
Hence, by a similar analysis to the performed in Theorem \ref{thm:1}, we obtain the following estimate
\begin{equation}
\sup_{t\in (0,T)} \mathcal{D}_\delta(\theta(t,\cdot)) \leq \mathcal{D}_\delta(\theta^0) + \int_0^T\iint_{\rd\times\rd} \frac{|u(t,x)-u(t,y)|}{|x-y|+\delta} d\pi_t(x,t).
\label{eq:first_estimate_p1}
\end{equation}
In this case we cannot apply the Morrey inequality \eqref{eq:morrey} as before. Nonetheless, we might make use of the alternative theory developed for weak $L^p$ spaces and apply the inequalities \eqref{eq:weak_morrey2} and \eqref{eq:weak_morrey1}. Therefore we can estimate pointwise the integrand in \eqref{eq:first_estimate_p1} for almost every $t\in (0,T)$ and every $x,y\in N_t$, where $\mathcal{L}^d(N_t)=0$ by
\begin{equation}
\frac{|u(t,x)-u(t,y)|}{|x-y|+\delta} \lesssim \min\left\lbrace \frac{|u(t,x)|+|u(t,y)|}{\delta}, G_\varepsilon^1(t,x)+G_\varepsilon^1(t,y) \right\rbrace + G_\varepsilon^2(t,x)+G_\varepsilon^2(t,y) .
\label{eq:uniq_step1}
\end{equation}
Notice that, since the marginals of the optimal transport plan $\pi_t$ are absolutely continuous with respect to $\mathcal{L}^d$, the pointwise estimate holds for almost every $t\in(0,T)$ and $\pi_t$-almost every $(x,y)\in\rd\times\rd$. 

To begin with, we take care of the first term in the right-hand side. We introduce this first part of the estimate into \eqref{eq:first_estimate_p1} and split the terms as follows,
\begin{equation*}
\int_0^T\iint_{\rd\times\rd} \min\left\lbrace \frac{|u(t,x)|+|u(t,y)|}{\delta}, G_\varepsilon^1(t,x)+G_\varepsilon^1(t,y) \right\rbrace d\pi_t(x,y)dt \leq I_1 + I_2,
\end{equation*}
where
\begin{align*}
I_1 & = \int_0^T\iint_{\rd\times\rd} \left( \min\left\lbrace \frac{|u(t,x)|}{\delta}, G_\varepsilon^1(t,x) \right\rbrace + \min\left\lbrace \frac{|u(t,y)|}{\delta}, G_\varepsilon^1(t,y) \right\rbrace \right)d\pi_t(x,y)dt , \\
I_2 & = \int_0^T\iint_{\rd\times\rd} \left( \min\left\lbrace \frac{|u(t,x)|}{\delta}, G_\varepsilon^1(t,y) \right\rbrace + \min\left\lbrace \frac{|u(t,y)|}{\delta}, G_\varepsilon^1(t,x) \right\rbrace\right) d\pi_t(x,y)dt .
\end{align*}
By means of the marginal condition for the optimal transport plan we have
\begin{equation*}
I_1 = \int_0^T\intrd \min\left\lbrace \frac{|u(t,x)|}{\delta},G_\varepsilon^1(t,x) \right\rbrace |\theta(t,x)| dxdt.
\end{equation*}
In order to make the notation simpler, we write $\psi = \min\{|u|/\delta,G_\varepsilon^1\}$. The main challenge now comes from the fact that $G_\varepsilon^1$ can only be bounded in the weak space $L^{1,\infty}$ while $u$ in $L^{p,\infty}$. Let us define the finite measure
\begin{equation*}
d\mu(t,x) = \chi_{(0,T)}(t)|\theta(t,x)|\,d\bigl(\mathcal{L}^1\otimes\mathcal{L}^d\bigr)
\end{equation*}
in $\mathbb{R}^{d+1}$ so that, since $\psi$ is defined as a minimum, we can bound on the one hand
\begin{equation}
\|\psi\|_{L^{1,\infty}(\mu)} \leq \|G_\varepsilon^1\|_{L^{1,\infty}(\mu)} \leq \|\theta\|_{L^\infty}\|G_\varepsilon^1\|_{L^1(L^{1,\infty})} \leq \varepsilon \|\theta\|_{L^\infty}
\label{eq:uniq_step2}
\end{equation}
and on the other hand by using \eqref{eq:weak_morrey1} also
\begin{equation}\label{eq:uniq_step3}
\|\psi\|_{L^{p,\infty}(\mu)} \leq \frac{1}{\delta}\|u\|_{L^{p,\infty}(\mu)} \leq \frac{1}{\delta}\|\theta\|_{L^{\infty}}^{1/p}\|u\|_{L^{p,\infty}}.
\end{equation}
Now, using from Lemma 2.6 in \cite{CrippaNobiliSeisSpirito17} the interpolation inequality
	\begin{equation*}
		\|\psi\|_{L^1(\mu)} \leq \frac{p}{p-1}\|\psi\|_{L^{1,\infty}(\mu)} \left[ 1+\log\left(\frac{\mu(\R^{d+1})^{1-\frac{1}{p}}\|\psi\|_{L^{p,\infty}(\mu)}}{\|\psi\|_{L^{1,\infty}(\mu)}}\right)  \right],
	\end{equation*}
 and the  monotonicity of the expression on the right-hand side in the $L^{1,\infty}$-norm, we find for any $\theta\not\equiv 0$ the bound
\begin{equation}
I_1 = \|\psi\|_{L^1(\mu)} \lesssim \varepsilon\|\theta\|_{L^\infty} \left[1+\log\left( \frac{1}{\varepsilon\delta}\left(\frac{\|\theta\|_{L^1}}{\|\theta\|_{L^\infty}}\right)^{1-\frac{1}{p}}\|u\|_{L^{p,\infty}} \right)\right] .
\label{eq:i1_estimate}
\end{equation}

The argument for $I_2$ will be similar to the just performed estimates. To do so, we need to prove that the estimates \eqref{eq:uniq_step2} and \eqref{eq:uniq_step3} hold also in the situation of $I_2$. Recall the characterization of the optimal transport plans through the measurable maps $T$ and $S$, introduced in Section \ref{s:opt}, which together with the marginal condition give
\begin{equation*}
I_2 = \int_0^T\!\!\intrd \!\left( \min\left\lbrace \frac{|u\circ S|(t,y)}{\delta}, G_\varepsilon^1(t,y)\right\rbrace\theta^-(t,y) +\min\left\lbrace \frac{|u\circ T|(t,x)}{\delta}, G_\varepsilon^1(t,x)\right\rbrace\theta^+(t,x) \right) dydt .
\end{equation*}
The treatment of the two terms now is quite similar, therefore it would be enough to focus on one of them, say the last one. Analogously to the estimate for $I_1$, we define now the function
\begin{equation*}
\psi = \min\left\lbrace \frac{|u\circ T|}{\delta}, G_\varepsilon^1 \right\rbrace,
\end{equation*}
and the finite measure $d\mu(t,x) = \chi_{(0,T)}(t)\theta^+(t,x)d\mathcal{L}^1\otimes\mathcal{L}^d$ on $\mathbb{R}^{d+1}$. The first estimate \eqref{eq:uniq_step2} remains valid without any change since it comes from assuming $\psi\leq G_\varepsilon^1$, that holds true. For the second estimate \eqref{eq:uniq_step3}, however, we need to take care of some details. On the one hand, we have
\begin{equation*}
\|\psi\|_{L^{p,\infty}(\mu)} \leq \frac{1}{\delta}\|u\circ T\|_{L^{p,\infty}(\mu)},
\end{equation*}
and on the other hand, we have the relation $\theta^- = T_\# \theta^+$, which implies
\begin{equation*}
\mu(\{|u\circ T| > \lambda\}) = \left( T_\# \theta^+ \mathcal{L}^1\otimes\mathcal{L}^d \right)(\{|u| > \lambda\}) = \left( \theta^- \mathcal{L}^1\otimes\mathcal{L}^d \right)(\{|u| > \lambda\}).
\end{equation*}
Therefore, we have
\begin{equation*}
\begin{split}
\|u\circ T\|_{L^{p,\infty}(\mu)} & = \sup_{\lambda >0} \left(\lambda^p \int_0^T\intrd \chi_{\{|u\circ T| > \lambda\}}\theta^+(t,x)dxdt\right)^{1/p} \\
& = \sup_{\lambda >0} \left(\lambda^p \int_0^T\intrd \chi_{\{|u| > \lambda\}}\theta^-(t,x)dxdt\right)^{1/p} \leq \|\theta\|_{L^\infty}^{1/p}\|u\|_{L^{p,\infty}}.
\end{split}
\end{equation*}
Hence the estimate \eqref{eq:uniq_step3} also holds for $I_2$ and we arrive to the estimate \eqref{eq:i1_estimate} in this case as well.

Finally, we can control the terms related to $G_\varepsilon^2$ in \eqref{eq:uniq_step1} by means of the marginal conditions for the optimal transport plan and Hölder inequality,
\begin{equation}
\int_0^T\intrd (G_\varepsilon^2(t,x)+G_\varepsilon^2(t,y))d\pi_t(x,y) = \int_0^T\intrd G_\varepsilon^2|\theta|dtdx \leq \|\theta\|_{L^\infty(L^2)}\|G_\varepsilon^2\|_{L^1(L^2)}.
\label{eq:g2}
\end{equation}
that is bounded since $\theta\in L^\infty(L^2)$ by interpolation. Therefore we can plug the estimates \eqref{eq:i1_estimate} and \eqref{eq:g2} into \eqref{eq:uniq_step1} and it yields the desired stability estimate.
\end{proof}
From Proposition \ref{prop:stab_est_2} it is easy to deduce uniqueness with the help of Lemma \ref{lemma:bound_otd}. It remains to prepare for the proof of existence. 
We will establish existence by a standard mollification-and-compactness procedure. 
For this, we provide an auxiliary lemma about the convergence of the velocity fields in  appropriate Lebesgue spaces that we present here in a quantitative way. We believe this result is  of independent mathematical interest.
\begin{lemma}\label{lemma:u_convergence}
Consider $\omega_n,\omega:\rd\rightarrow\rd$ integrable functions all with the same total mass. Assume that $\|\omega_n - \omega\|_{L^1(\rd)}$ is bounded uniformly in $n\in\mathbb{N}$. Let $u_n = k\ast \omega_n$, $u=k\ast \omega$ with $k$ satisfying~\ref{ass:k1}--\ref{ass:k4}, then for every $s>0$ and any $1\leq p < d/(d-1)$ it holds 
\begin{equation*}
\|u_n-u\|_{L^p(B_s(0))} \lesssim \bigl(s^{\frac{d}{p}}W_1(\omega_n,\omega)\bigr)^{\frac{d-p(d-1)}{d+p}}.   
\end{equation*}
\end{lemma}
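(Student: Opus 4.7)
The plan is to split the kernel into a piece that is globally Lipschitz (so we can pair it with $\omega_n-\omega$ via Kantorovich--Rubinstein duality) and a piece that is supported near the origin and integrable in $L^p$ for $p<d/(d-1)$ (so we can pair it with $\omega_n-\omega$ via Young's inequality in $L^1$). An optimization in the splitting parameter will then produce exactly the stated exponent.

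More precisely, I would fix a smooth cutoff $\chi_\eta:\rd\to [0,1]$ with $\chi_\eta\equiv 0$ on $B_{\eta/2}(0)$, $\chi_\eta\equiv 1$ outside $B_\eta(0)$, and $|\grad \chi_\eta|\lesssim 1/\eta$, and write
\[
u_n-u = k\ast (\omega_n-\omega) = (\chi_\eta k)\ast (\omega_n-\omega)+ ((1-\chi_\eta) k)\ast (\omega_n-\omega).
\]
For the first piece, I would use the kernel bounds from \ref{ass:k1}--\ref{ass:k4}: on the support of $\chi_\eta$ we have $|k|\lesssim |z|^{-(d-1)}\lesssim \eta^{-(d-1)}$ and $|\grad k|\lesssim |z|^{-d}\lesssim \eta^{-d}$, so $\|\grad(\chi_\eta k)\|_{L^\infty}\lesssim \eta^{-d}$. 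Hence for every $x$ the map $y\mapsto \chi_\eta(x-y)k(x-y)$ is Lipschitz with constant $\lesssim \eta^{-d}$, and since $\omega_n-\omega$ has zero total mass, the Kantorovich--Rubinstein duality~\eqref{eq:wasserstein} yields the pointwise estimate
\[
\left|(\chi_\eta k)\ast(\omega_n-\omega)(x)\right| \lesssim \frac{1}{\eta^d}\,W_1(\omega_n,\omega),
\]
which, integrated over $B_s(0)$, gives $\|(\chi_\eta k)\ast(\omega_n-\omega)\|_{L^p(B_s(0))}\lesssim s^{d/p}\eta^{-d}W_1(\omega_n,\omega)$.

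For the second piece, the support of $(1-\chi_\eta)k$ lies in $B_\eta(0)$, and the pointwise bound $|k(z)|\lesssim |z|^{-(d-1)}$ gives
\[
\|(1-\chi_\eta)k\|_{L^p}^{p} \lesssim \int_{B_\eta(0)}\frac{dz}{|z|^{p(d-1)}} \lesssim \eta^{d-p(d-1)},
\]
where finiteness of the integral is exactly where the restriction $p<d/(d-1)$ enters. Then Young's convolution inequality (with exponents $1/p+1/1=1+1/p$) combined with the uniform $L^1$-bound on $\omega_n-\omega$ produces
\[
\|((1-\chi_\eta)k)\ast(\omega_n-\omega)\|_{L^p(\rd)} \lesssim \eta^{(d-p(d-1))/p}.
\]

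Adding the two estimates yields $\|u_n-u\|_{L^p(B_s(0))} \lesssim s^{d/p}\eta^{-d}W_1(\omega_n,\omega)+\eta^{(d-p(d-1))/p}$, and equating the two terms selects $\eta^{(d+p)/p}\sim s^{d/p}W_1(\omega_n,\omega)$, so that $\eta\sim (s^{d/p}W_1(\omega_n,\omega))^{p/(d+p)}$ and the common value of both terms becomes $(s^{d/p}W_1(\omega_n,\omega))^{(d-p(d-1))/(d+p)}$, which is exactly the claim. The main (mild) obstacle is verifying the uniform Lipschitz bound on $\chi_\eta k$, which really is a direct computation using the growth controls in \ref{ass:k1}--\ref{ass:k4} together with the product rule; everything else is scaling and a one-parameter optimization.
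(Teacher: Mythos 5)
Your proposal is correct and follows essentially the same route as the paper: the paper likewise splits $k$ into a near-origin piece $\eta_R k$ supported in $B_{2R}(0)$, handled by Young's inequality against $\|\omega_n-\omega\|_{L^1}$, and a far-field Lipschitz piece $(1-\eta_R)k$ with Lipschitz constant $\lesssim R^{-d}$, handled by the dual formulation \eqref{eq:wasserstein} of $W_1$ using the equal-mass assumption, followed by the same optimization in the cutoff radius. The only differences are notational (your $\chi_\eta$ is the paper's $1-\eta_R$).
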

This lemma states that under a suitable convergence assumptions for $\omega_n$ in $L^1(\rd)$ we can control the $L^p_{loc}(\rd)$ convergence of $u_n$ in terms of  the Wasserstein distance for $1\leq p < d/(d-1)$.
The most evident consequence of this lemma is the following result, whose proof is now obvious.
\begin{lemma}
Under the same assumptions of Lemma \ref{lemma:u_convergence}, if $W_1(\omega_n,\omega)\rightarrow 0$ as $n\rightarrow\infty$, then $u_n$ converges to $u$ locally in  $L^p(\rd)$, provided that $1\leq p < d/(d-1)$.
\label{cor:u_convergence}
\end{lemma}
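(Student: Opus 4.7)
The plan is to write $u_n - u = k \ast (\omega_n - \omega)$ and split the kernel into a singular near-field and a Lipschitz far-field at a length scale $R>0$ to be optimized, controlling the two contributions by Young's inequality and Kantorovich--Rubinstein duality for $W_1$, respectively. To this end, introduce a smooth cut-off $\varphi_R\colon \rd \to [0,1]$ that vanishes on $B_{R/2}(0)$, equals one outside $B_R(0)$, and satisfies $|\grad \varphi_R| \lesssim R^{-1}$. Define $k_1 = k(1-\varphi_R)$, supported in $B_R(0)$, and $k_2 = k\varphi_R$, which extends smoothly by zero on $B_{R/2}(0)$.

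The pointwise bound $|k(x)|\lesssim|x|^{-(d-1)}$ from (k$_3$) gives
$$\|k_1\|_{L^p(\rd)}^p \lesssim \int_0^R r^{d-1-p(d-1)}\,dr \lesssim R^{d - p(d-1)},$$
which is finite precisely because $p < d/(d-1)$; this is where the integrability restriction on $p$ is forced. For the far-field, combining $|\grad k|\lesssim |x|^{-d}$ on the support of $\varphi_R$ with $|k|\,|\grad \varphi_R| \lesssim R^{-(d-1)}\cdot R^{-1}$ on the transition annulus $R/2 < |x| < R$, one obtains the uniform Lipschitz bound $\|\grad k_2\|_{L^\infty(\rd)} \lesssim R^{-d}$.

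Young's inequality together with the assumed uniform bound on $\|\omega_n - \omega\|_{L^1}$ gives
$$\|k_1 \ast (\omega_n - \omega)\|_{L^p(\rd)} \le \|k_1\|_{L^p}\,\|\omega_n - \omega\|_{L^1} \lesssim R^{d/p - (d-1)}.$$
For the far-field contribution, fix $x \in B_s(0)$: the map $y \mapsto k_2(x-y)$ is globally Lipschitz with constant $\lesssim R^{-d}$, and since $\omega_n$ and $\omega$ have the same total mass, the dual representation of $W_1$ (cf.~\eqref{eq:wasserstein}) produces
$$|k_2 \ast (\omega_n - \omega)(x)| \lesssim R^{-d}\, W_1(\omega_n, \omega),$$
whence $\|k_2 \ast (\omega_n - \omega)\|_{L^p(B_s)} \lesssim s^{d/p}\, R^{-d}\, W_1(\omega_n, \omega)$. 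Summing and optimizing in $R$ by equating the two terms forces $R \sim (s^{d/p}\, W_1(\omega_n,\omega))^{p/(d+p)}$, which yields the claimed rate with exponent $(d - p(d-1))/(d+p)$.

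The principal technical hurdle is securing the uniform Lipschitz bound on $k_2$ across the cut-off annulus: it is only the exact matching of the orders $|\grad k| \lesssim R^{-d}$ and $|k|\,|\grad \varphi_R| \lesssim R^{-d}$ on $R/2 < |x| < R$ that produces a clean far-field Lipschitz constant which then balances cleanly against the $L^p$-size of $k_1$. Once that decomposition is in place, the argument reduces to a standard one-parameter balancing of singular-kernel $L^p$ bounds against Wasserstein duality for smooth kernels.
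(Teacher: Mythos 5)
Your argument is correct and is essentially the paper's own: the paper deduces this lemma immediately from the quantitative estimate in Lemma \ref{lemma:u_convergence}, whose proof uses exactly your near-field/far-field splitting of $k$ at scale $R$, Young's inequality with $\|F_{2R}\|_{L^p}\lesssim R^{(d-p(d-1))/p}$ for the singular part, Kantorovich--Rubinstein duality with the $R^{-d}$ Lipschitz bound for the truncated part, and optimization in $R$. The only cosmetic difference is the placement of the cut-off annulus ($R/2<|x|<R$ versus $R<|x|<2R$), which changes nothing.
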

\begin{proof}[Proof of Lemma \ref{lemma:u_convergence}]
First, we show that $\omega\in L^1(\rd)$ implies $u\in L^p_{loc}(\rd)$. For this purpose, it is convenient and enough to assume that $\omega$ is nonnegative density of mass one, $\|\omega\|_{L^1}=1$. We choose $s>0$ arbitrary, and derive  by means of the Jensen inequality applied with the measure $\omega(y)dy$ and Fubini's theorem,
\begin{equation*}
\begin{split}
\int_{B_s(0)} |u|^pdx & \lesssim  \int_{B_s(0)}\left(\intrd \frac{\omega(y)}{|x-y|^{d-1}}dy\right)^p dx\lesssim \int_{B_s(0)}\intrd \frac{\omega(y)}{|x-y|^{p(d-1)}}dydx \\
& = \intrd\left(\int_{B_s(0)} \frac{1}{|x-y|^{p(d-1)}}dx\right)\omega(y)dy \lesssim s^{d - p(d-1)}
\end{split}
\end{equation*}
provided that $1\leq p < d/(d-1)$. In order to prove the strong convergence of $(u_n)_{n\in\mathbb{N}}$ to $u$ in $L^p_{loc}(\rd)$ we first study the pointwise difference $u_n(x)-u(x)$. Let $R>0$ be such that $k(z)$ is Lipschitz for all $z\in B_R(0)^c$, and define the cutoff functions $\eta_R:\rd\rightarrow [0,1]$, $\eta_R\in C_c^\infty(\rd)$, $\eta_R(x) = 1$ for all $x\in B_R(0)$ and $\eta_R(x)=0$ for all $x\in\overline{B_{2R}(0)}^c$. Then $\varphi(x)=(1-\eta_R)k(x)$ is a Lipschitz function with Lipschitz constant bounded by $\|\nabla\varphi\|_{L^\infty(\rd)} \lesssim R^{-d}$. Thus, we can write
\begin{equation*}
\begin{split}
u_n(x)-u(x) & = \intrd k(x-y)[\omega_n(y)-\omega(y)]dy \\
& = \intrd (\eta_Rk)(x-y)[\omega_n(y)-\omega(y)]dy + \intrd \varphi(x-y)[\omega_n(y)-\omega(y)]dy.
\end{split}
\end{equation*}
Since $\varphi(x)$ is Lipschitz, the second term in the right hand side can be related to the $1$-Wasserstein distance~\eqref{eq:wasserstein}, provided $\omega_n$ and $\omega$ are of the same total mass,
\begin{equation*}
\left|\intrd \varphi(x-y)[\omega_n(y)-\omega(y)]dy \right|\lesssim  \frac{1}{R^d}W_1(\omega_n,\omega).
\end{equation*}
On the other hand, the remaining term can be bounded by
\begin{equation*}
\intrd (\eta_Rk)(x-y)[\omega_n(y)-\omega(y)]dy \lesssim \int_{B_{2R}(x)} \frac{1}{|x-y|^{d-1}}|\omega_n(y)-\omega(y)|dy = (F_{2R}\ast |\omega_n-\omega|)(x),
\end{equation*}
where the function $F_{r}$ is defined as $F_{r}(z) = |z|^{1-d}\chi_{B_{r}(0)}(z)$ for any $r>0$. Thus, for every $s>0$ it holds
\begin{equation*}
\|u_n-u\|_{L^p(B_s(0))} \lesssim \|F_{2R}\ast|\omega_n-\omega| \|_{L^p(\R^d)}+\|R^{-d}W_1(\omega_n,\omega)\|_{L^p(B_s(0))}.
\end{equation*}
For the first term, we might use the Young's convolution inequality so that we get
\begin{equation*}
\|F_{2R}\ast|\omega_n-\omega| \|_{L^p(\R^d)} \leq \|F_{2R}\|_{L^p(\R^d)}\|\omega_n-\omega\|_{L^1(\rd)} \lesssim R^{\frac{d-p(d-1)}p}
\end{equation*}
given that $\|\omega_n-\omega\|_{L^1(\rd)}$ is bounded uniformly in $n\in\mathbb{N}$ and that $1\leq p< d/(d-1)$. The second term on the right-hand side is the norm of a constant, thus
\begin{equation*}
	\|u_n-u\|_{L^p(B_s(0))} \lesssim R^{\frac{d-p(d-1)}{p}} + s^{\frac{d}{p}}R^{-d}W_1(\omega_n,\omega).
\end{equation*}
But we can optimize the bound in $R>0$ to the effect that for all $s>0$ it holds
\begin{equation*}
\|u_n-u\|_{L^p(B_s(0))} \lesssim \Bigl(s^{\frac{d}{p}}W_1(\omega_n,\omega)\Bigr)^{\frac{d-p(d-1)}{d+p}}. \qedhere 
\end{equation*}
\end{proof}
We are now in the position to prove Theorem \ref{thm:2}.
\begin{proof}[Proof of Theorem \ref{thm:exist_uniq}]
First of all, we want to give a sketch of an  existence proof.  On this regard, we notice that distributional solutions are well-defined because $\nabla\theta\in L^1((0,T)\times\rd)$, see Remark \ref{R1}, and because $u\theta\in L^1((0,T)\times \rd)$. The letter  follows from the estimate
\begin{equation}
\|u\theta\|_{L^1} = \|u\|_{L^1(\mu)} \leq \frac{p}{p-1}\|\theta\|_{L^1}^{1-\frac{1}{p}}\|u\|_{L^{p,\infty}(\mu)} \leq \frac{p}{p-1}\|\theta\|_{L^1}^{1-\frac{1}{p}}\|\theta\|_{L^\infty}^{\frac{1}{p}}\|u\|_{L^{p,\infty}}<+\infty,
\label{eq:uthL1}
\end{equation}
where the measure $\mu$ is defined by $\mu(t,x) = \chi_{(0,T)}(t)\theta(x)\mathcal{L}^1\otimes \mathcal{L}^d$ --- we assume that $\theta$ is nonnegative for convenience --- and the first inequality is due to the embedding $ L^{p,\infty}(X,\mu)\subset L^1(X,\mu)$ on a finite measure space $(X,\mu)$, see, e.g., Lemma 2.5 in \cite{CrippaNobiliSeisSpirito17}.

Now, to prove existence, we will proceed by regularizing the velocity field and initial datum and then passing to the limit under the appropriate conditions. Denoting by $\rho_{\eps}$ a standard mollifier on $\R^d$, 
we define $\omega_\varepsilon = \omega\ast\rho_\varepsilon\in L^1((0,T);C^1_b(\rd))$ and $\theta^0_{\varepsilon} = \theta^0\ast\rho_\varepsilon\in C^1_b(\rd)$. Then, since $u=k\ast\omega$, we can also define $u_\varepsilon = k\ast \omega_\varepsilon \in L^1((0,T);C^1_b(\rd))$. Therefore, by standard theory, we know that there exist a unique solution $\theta_\varepsilon\in C((0,T);C^1_b(\rd))$ of the Cauchy problem
\begin{equation*}
\left\lbrace
\begin{array}{ll}
\partial_t\theta_\varepsilon + \nabla\cdot(u_\varepsilon\theta_\varepsilon) = \kappa\Delta \theta_\varepsilon & \text{ in } (0,T)\times\rd, \\
\theta_\varepsilon(0,\cdot) = \theta_{0,\varepsilon} & \text{ in } \rd.
\end{array}
\right.
\end{equation*}
Now we can use the elementary a priori estimates \eqref{eq:apriori_1} and our assumptions on the initial datum~\eqref{eq:initial} in order to deduce that $\theta_\varepsilon$ is bounded in $L^\infty((0,T);L^q(\rd))$ independently of $\varepsilon>0$ for every $1\leq q\leq \infty$. It follows that we can extract a subsequence (not relabelled) that converges weakly-$*$ to some function $\theta$ in $L^{\infty}(L^q)$ for any $q\in (1,\infty]$, and then by invoking some soft arguments, also in $L^{\infty}(L^1)$. Moreover, inspection of \eqref{eq:adv-diff} reveals that the time derivatives $\partial_t \theta_{\eps}$ are bounded  in $L^{\infty}(H^{-s})$ for some $s>0$, from which we infer that that the convergence takes place in $C^0($w-$L^q)$ for any $q\in [1,\infty]$, where $\mbox{w-}L^q(\R^d)$ is the standard $L^q$ space equipped with the weak topology. In view of Lemma \ref{cor:u_convergence} and \eqref{100} and because Wasserstein distances metrize weak convergence, see Theorem~7.12 in \cite{Villani03}, the velocity fields $u_{\eps}$ are converging locally in any $L^q$ space. As a consequence, the product $u_{\eps}\theta_{\eps}$ is convergent on compact sets, and thus, passing to the limit in the distributional formulation of \eqref{eq:adv-diff}, see Definition \ref{def:distr_sol}, we find that $\theta$ solves the advection-diffusion equation with velocity $u$ and initial datum $\theta^0$.

The proof of the uniqueness relies on the stability estimate from Proposition \ref{prop:stab_est_2}. Towards a contradiction, we assume that there is a solution $\theta(t,x)$ of the advection-diffusion equation \eqref{eq:adv-diff} with initial datum $\theta^0\equiv 0$ and such that $\theta(t,x) \not\equiv 0$, so that, in particular, $\|\theta\|_{L^\infty}>0$. Then we can write
\begin{equation*}
\sup_{0\leq t\leq T} \mathcal{D}_\delta (\theta(t,\cdot)) \lesssim \varepsilon\left[1+\log\left(\frac{1}{\varepsilon\delta}\right)\right] + C_\varepsilon,
\end{equation*}
where the symbol $\lesssim$ now includes $\|\theta\|_{L^\infty}$, $\|\theta\|_{L^1}$ and $\|u\|_{L^{p,\infty}}$. Notice that if $\delta \in (0,1/e)$ it holds
\begin{equation*}
\frac{1}{|\log\delta|}\left[1+\log\left(\frac{1}{\delta\varepsilon}\right)\right] \leq \frac{1+|\log\delta|+|\log\varepsilon|}{|\log\delta|} \leq 2+|\log\varepsilon|,
\end{equation*}
and, therefore, we can choose $a>0$ arbitrarily small and fix $\varepsilon>0$ such that
\begin{equation*}
\frac{\varepsilon}{|\log\delta|}\left[1+\log\left(\frac{1}{\delta\varepsilon}\right)\right] \leq \frac{a}{2}.
\end{equation*}
Since $\varepsilon>0$ and $C_\varepsilon>0$ are fixed now, we may choose $\delta\in(0,1/e)$ small enough so that
\begin{equation*}
\frac{C_\varepsilon}{|\log\delta|} \leq \frac{a}{2}.
\end{equation*}
Combining the previous estimates, we find that
\begin{equation*}
\sup_{0\leq t\leq T} \mathcal{D}_\delta (\theta(t,\cdot)) \lesssim a|\log\delta|.
\end{equation*}
Thus, since $a>0$ was arbitrarily small, it holds
\begin{equation*}
\frac{\mathcal{D}_\delta(\theta(t,\cdot))}{|\log\delta|} \rightarrow 0 \quad \text{as } \delta\rightarrow 0.
\end{equation*}
To conclude, it only remains to notice that Lemma \ref{lemma:bound_otd} with $\gamma = \sqrt{\delta}$ implies that
\begin{equation*}
\mathcal{D}^b(\theta(t,\cdot)) \leq 2\frac{\mathcal{D}_\delta(\theta(t,\cdot))}{|\log\delta|} + \sqrt{\delta}\|\theta(t,\cdot)\|_{L^1(\rd)}
\end{equation*}
for all $\delta>0$ small enough. In particular letting $\delta \rightarrow 0$ we get $\mathcal{D}^b(\theta(t,\cdot))=0$ and since $\mathcal{D}^b(\cdot)$ is a norm, it implies $\theta\equiv 0$. This contradicts the hypothesis at the beginning of the proof and since we found that the only solution of \eqref{eq:adv-diff} with initial datum $\theta^0=0$ is $\theta(t,x)=0$ almost every $(t,x)\in (0,T)\times\rd$, it yields the sought uniqueness.
\end{proof}

\section*{Acknowledgement} 
This work is funded by the Deutsche Forschungsgemeinschaft (DFG, German Research Foundation) under Germany's Excellence Strategy EXC 2044 --390685587, Mathematics M\"unster: Dynamics--Geometry--Structure and by the DFG Grant 432402380.

\bibliography{diffusive-mixing}
\bibliographystyle{acm}
\end{document}